\newtheorem{theorem}{Theorem}
\theoremstyle{plain}
\newtheorem{corollary}{Corollary}
\newtheorem{lemma}{Lemma}
\newtheorem{proposition}{Proposition}
\numberwithin{equation}{section}
\begin{document}
\title[Hermite-Hadamard Type Inequalities]{Hermite-Hadamard type
inequalities for functions whose derivatives are $(\alpha ,m)$-convex}
\author{\.{I}mdat \.{I}\c{s}can}
\address{Department of Mathematics, Faculty of Science and Arts,\\
Giresun University, 28100, Giresun, Turkey.}
\email{imdat.iscan@giresun.edu.tr}
\date{February 26, 2012}
\subjclass[2000]{26A51, 26D15}
\keywords{$(\alpha ,m)$-convex functions,Hermite-Hadamard's inequality, H%
\"{o}lder's integral inequality}

\begin{abstract}
In this paper several inequalities of the right-hand side of
Hermite-Hadamard inequality are obtained for the class of functions whose
derivatives in absolutely value at certain powers are $(\alpha ,m)$%
-convex.Some applications to special means of positive real numbers are also
given.
\end{abstract}

\maketitle

\section{Introduction}

\bigskip

Let $f:I\subset \mathbb{R\rightarrow R}$ be a convex function defined on the
interval $I$ of real numbers and $a.b\in I$ with $a<b$, then%
\begin{equation}
f\left( \frac{a+b}{2}\right) \leq \frac{1}{b-a}\dint\limits_{a}^{b}f(x)dx%
\leq \frac{f(a)+f(b)}{2}\text{.}  \label{1-1}
\end{equation}

This doubly inequality is known in the literature as Hermite-Hadamard
integral inequality for convex functions.

In \cite{M93} Mihe\c{s}an introduced the class of $(\alpha ,m)$-convex
functions as the following:

The function $f:\left[ 0,b\right] \mathbb{\rightarrow R}$, $b>0$, is said to
be $(\alpha ,m)$-convex where $(\alpha ,m)\in \left[ 0,1\right] ^{2}$, if we
have%
\begin{equation*}
f\left( tx+m(1-t)y\right) \leq t^{\alpha }f(x)+m(1-t^{\alpha })f(y)
\end{equation*}%
for all $x,y\in \left[ 0,b\right] $ and $t\in \left[ 0,1\right] $.

It can be easily that for $(\alpha ,m)\in \left\{ (0,0),(\alpha
,0),(1,0),(1,m),(1,1),(\alpha ,1)\right\} $one obtains the following classes
of functions: increasing, $\alpha $-starshaped, starshaped, $m$-convex,
convex, $\alpha $-convex.

Denote by $K_{m}^{\alpha }(b)$ the set of all $(\alpha ,m)$-convex functions
on $\left[ 0,b\right] $ for which $f(0)\leq 0$. For recent results and
generalizations concerning $m$-convex and $(\alpha ,m)$-convex functions
(see \cite{BOP08,BPR06,OAK11,OAS10,OKS10,OSS11,SOS10,SSOR09}).

In \cite{DA98} Dragomir and Agarwal established the following result
connected with the right-hand side of (\ref{1-1}).

\begin{theorem}
Let $f:I\subset \mathbb{R\rightarrow R}$ be a differentiable mapping on $%
I^{\circ }$, where $a,b\in I$ with $a<b.$If $\left\vert f^{\prime
}\right\vert $ is convex on $[a,b]$, then the following inequality holds:%
\begin{equation}
\left\vert \frac{f(a)+f(b)}{2}-\frac{1}{b-a}\dint\limits_{a}^{b}f(x)dx\right%
\vert \leq \frac{b-a}{8}\left[ \left\vert f^{\prime }(a)\right\vert
+\left\vert f^{\prime }(b)\right\vert \right] .  \label{1-2}
\end{equation}
\end{theorem}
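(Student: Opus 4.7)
The plan is to follow the standard Dragomir--Agarwal approach based on a single integration-by-parts identity, followed by the convexity estimate on $|f'|$ and an elementary integral computation.

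First I would establish the representation identity
\begin{equation*}
\frac{f(a)+f(b)}{2}-\frac{1}{b-a}\int_{a}^{b}f(x)\,dx=\frac{b-a}{2}\int_{0}^{1}(1-2t)\,f'\bigl(ta+(1-t)b\bigr)\,dt,
\end{equation*}
which is obtained by applying integration by parts to the right-hand side (treating $1-2t$ as the factor to be integrated, whose primitive $t-t^{2}=-(1-t)t$ vanishes at both endpoints, so the boundary terms collapse into $\frac{f(a)+f(b)}{2}$), and then substituting $x=ta+(1-t)b$ in the remaining integral to recover $\frac{1}{b-a}\int_a^b f(x)\,dx$. This identity is the workhorse of the proof and I expect the bookkeeping of signs and of the change of variables to be the only slightly delicate step.

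Next I would take absolute values on both sides, pass the modulus inside the integral via the triangle inequality, and apply convexity of $|f'|$ in the form
\begin{equation*}
\bigl|f'\bigl(ta+(1-t)b\bigr)\bigr|\le t\,|f'(a)|+(1-t)\,|f'(b)|
\end{equation*}
for $t\in[0,1]$. This yields
\begin{equation*}
\left|\frac{f(a)+f(b)}{2}-\frac{1}{b-a}\int_{a}^{b}f(x)\,dx\right|\le \frac{b-a}{2}\int_{0}^{1}|1-2t|\bigl(t\,|f'(a)|+(1-t)\,|f'(b)|\bigr)\,dt.
\end{equation*}

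Finally I would evaluate the two elementary integrals. By splitting at $t=1/2$ (or by symmetry $t\leftrightarrow 1-t$), one gets
\begin{equation*}
\int_{0}^{1}|1-2t|\,t\,dt=\int_{0}^{1}|1-2t|\,(1-t)\,dt=\frac{1}{4},
\end{equation*}
so the right-hand side collapses to $\frac{b-a}{2}\cdot\frac{1}{4}\bigl(|f'(a)|+|f'(b)|\bigr)=\frac{b-a}{8}\bigl(|f'(a)|+|f'(b)|\bigr)$, which is the claimed inequality \eqref{1-2}. The main conceptual step is producing the right identity in the first paragraph; once that is in hand, everything else is routine.
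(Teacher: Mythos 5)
Your argument is correct and is essentially the paper's own route to this inequality: Lemma \ref{2.1} with $\lambda=\mu$ is exactly your representation identity (up to the substitution $t\mapsto 1-t$, which swaps the kernel $1-2t$ for $2t-1$ and $ta+(1-t)b$ for $tb+(1-t)a$), the theorem then being recovered in Corollary \ref{2.a} by taking $\lambda=\mu$, $m=\alpha=q=1$ in Theorem \ref{1.1}; your convexity estimate and the evaluations $\int_0^1\left\vert 1-2t\right\vert t\,dt=\int_0^1\left\vert 1-2t\right\vert (1-t)\,dt=\tfrac14$ are all correct. One slip to repair in your parenthetical: in the integration by parts you must \emph{integrate} $f'$ (antiderivative $f(ta+(1-t)b)/(a-b)$) and \emph{differentiate} $1-2t$, so that the boundary terms yield $\frac{f(a)+f(b)}{b-a}$; if you literally integrate $1-2t$ as you describe, the vanishing of $t-t^{2}$ at both endpoints kills the boundary terms outright (they cannot then ``collapse into'' $\frac{f(a)+f(b)}{2}$) and leaves an integral involving $f''$, which is neither wanted nor assumed to exist.
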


In \cite{SSOR09}, the following inequality of Hermite-Hadamard type for $%
(\alpha ,m)$-convex functions holds:

\begin{theorem}
\label{A.1}Let $f:\left[ 0,\infty \right) \rightarrow \mathbb{R}$ be an $%
(\alpha ,m)$-convex function with $(\alpha ,m)\in \left( 0,1\right] ^{2}.$
If $0\leq a<b<\infty $ and $f\in L[a,b],$ then one has the inequality:%
\begin{equation}
\frac{1}{b-a}\dint\limits_{a}^{b}f(x)dx\leq \min \left\{ \frac{f(a)+\alpha
mf\left( \frac{b}{m}\right) }{\alpha +1},\frac{f(b)+\alpha mf\left( \frac{a}{%
m}\right) }{\alpha +1}\right\} .  \label{1-3}
\end{equation}
\end{theorem}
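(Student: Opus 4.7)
The plan is to bound the average of $f$ on $[a,b]$ by each of the two quantities inside the minimum separately, by applying the $(\alpha,m)$-convexity inequality with two different choices of the base points and then integrating.

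For the first bound, I would parametrize the interval as $x = ta + (1-t)b$ for $t \in [0,1]$ and recognize this as $tx_0 + m(1-t)y_0$ with $x_0 = a$ and $y_0 = b/m$ (this is exactly where the $b/m$ in the statement comes from). Since $f \in K_m^\alpha$ is $(\alpha,m)$-convex, the defining inequality yields
\begin{equation*}
f(ta + (1-t)b) \leq t^\alpha f(a) + m(1 - t^\alpha)\, f(b/m).
\end{equation*}
Integrating both sides over $t \in [0,1]$ and using the change of variable $x = ta + (1-t)b$ on the left gives $\frac{1}{b-a}\int_a^b f(x)\,dx$ on the left, while on the right I only need the elementary integrals $\int_0^1 t^\alpha \, dt = 1/(\alpha+1)$ and $\int_0^1 (1-t^\alpha)\, dt = \alpha/(\alpha+1)$. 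This delivers the first half of the minimum.

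For the second bound, I would repeat the argument with the roles of the endpoints interchanged: parametrize as $x = tb + (1-t)a$, which equals $tb + m(1-t)(a/m)$, and apply the $(\alpha,m)$-convexity inequality with $x_0 = b$, $y_0 = a/m$. The same integration step then produces the companion estimate with $f(b)$ and $f(a/m)$. Taking the minimum of the two bounds yields the claim.

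No serious obstacle is expected; the main subtlety is simply noticing that the expressions $b/m$ and $a/m$ appear precisely because the convexity combination $tx + m(1-t)y$ must be forced to equal an honest convex combination of $a$ and $b$, which requires $y = b/m$ (respectively $y = a/m$). One should also remark that the argument implicitly requires $a/m, b/m$ to lie in the domain of $f$, which is ensured here since the domain is $[0,\infty)$ and $m \in (0,1]$.
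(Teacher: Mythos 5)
Your argument is correct and complete: writing $ta+(1-t)b = ta + m(1-t)\frac{b}{m}$ (resp. $tb+(1-t)a = tb+m(1-t)\frac{a}{m}$), applying the $(\alpha,m)$-convexity inequality, and integrating over $t\in[0,1]$ with $\int_0^1 t^{\alpha}\,dt = \frac{1}{\alpha+1}$ yields both halves of the minimum exactly as claimed. Note that the paper itself states this theorem without proof (it is imported from the cited reference \cite{SSOR09}), and your derivation is precisely the standard argument used there, so there is nothing to add beyond the domain remark you already made.
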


In \cite{BOP08} the following Hermite-Hadamard type inequalities for $m-$
and $(\alpha ,m)$-convex functions were obtained.

\begin{theorem}
Let $I$ be an open real interval such that $\left[ 0,\infty \right) \subset
I $. Let $f:I\mathbb{\rightarrow R}$ be a differentiable mapping on $I$ such
that $f^{\prime }\in L[a,b],$ where $0\leq a<b<\infty .$ If $\left\vert
f^{\prime }\right\vert ^{q}$ is $m$-convex on $[a,b]$, for some fixed $m\in
\left( 0,1\right] $ and $q\in \left( 1,\infty \right) ,$ then%
\begin{eqnarray}
\left\vert \frac{f(a)+f(b)}{2}-\frac{1}{b-a}\dint\limits_{a}^{b}f(x)dx\right%
\vert &\leq &\frac{b-a}{4}\left( \frac{q-1}{2q-1}\right) ^{\frac{q-1}{q}%
}\left( \mu _{1}^{\frac{1}{q}}+\mu _{2}^{\frac{1}{q}}\right)  \label{B1} \\
&\leq &\frac{b-a}{4}\left( \mu _{1}^{\frac{1}{q}}+\mu _{2}^{\frac{1}{q}%
}\right) \   \notag
\end{eqnarray}%
where%
\begin{eqnarray*}
\mu _{1} &=&\min \left\{ \frac{\left\vert f^{\prime }(a)\right\vert
^{q}+m\left\vert f^{\prime }\left( \frac{a+b}{2m}\right) \right\vert ^{q}}{2}%
,~\frac{\left\vert f^{\prime }(\frac{a+b}{2})\right\vert ^{q}+m\left\vert
f^{\prime }\left( \frac{a}{m}\right) \right\vert ^{q}}{2}\right\} , \\
\mu _{2} &=&\min \left\{ \frac{\left\vert f^{\prime }(b)\right\vert
^{q}+m\left\vert f^{\prime }\left( \frac{a+b}{2m}\right) \right\vert ^{q}}{2}%
,~\frac{\left\vert f^{\prime }(\frac{a+b}{2})\right\vert ^{q}+m\left\vert
f^{\prime }\left( \frac{b}{m}\right) \right\vert ^{q}}{2}\right\} .
\end{eqnarray*}
\end{theorem}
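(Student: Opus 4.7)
My plan is to open with the standard Dragomir--Agarwal integral identity
\begin{equation*}
\frac{f(a)+f(b)}{2}-\frac{1}{b-a}\int_{a}^{b}f(x)dx=\frac{b-a}{2}\int_{0}^{1}(1-2t)f^{\prime }(ta+(1-t)b)dt,
\end{equation*}
which follows from one integration by parts and already underlies the proof of the first theorem recalled above. Taking absolute values, splitting the $t$-integral at $1/2$, and performing the affine substitutions $u=1-2t$ on $[0,1/2]$ and $u=2t-1$ on $[1/2,1]$ rewrites the estimate as
\begin{equation*}
\left\vert \frac{f(a)+f(b)}{2}-\frac{1}{b-a}\int_{a}^{b}f(x)dx\right\vert \leq \frac{b-a}{4}(I_{1}+I_{2}),
\end{equation*}
where
\begin{equation*}
I_{1}=\int_{0}^{1}u\left\vert f^{\prime }\left( ub+(1-u)\tfrac{a+b}{2}\right) \right\vert du,\qquad I_{2}=\int_{0}^{1}u\left\vert f^{\prime }\left( ua+(1-u)\tfrac{a+b}{2}\right) \right\vert du.
\end{equation*}
This already accounts for the factor $(b-a)/4$ appearing in the asserted bound.

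Next I would apply H\"{o}lder's inequality to each $I_{j}$ with the conjugate pair $(q/(q-1),q)$, peeling off the weight $u$ from $|f^{\prime }|$. Since $\int_{0}^{1}u^{q/(q-1)}du=(q-1)/(2q-1)$, this yields exactly the constant $\bigl((q-1)/(2q-1)\bigr)^{(q-1)/q}$ appearing in the statement, and reduces matters to bounding the two unweighted $L^{q}$ averages $\int_{0}^{1}|f^{\prime }(\cdots )|^{q}du$.

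For these averages I would invoke the $m$-convexity of $|f^{\prime }|^{q}$, that is, the $\alpha =1$ special case of the defining inequality
\begin{equation*}
|f^{\prime }(tx+m(1-t)y)|^{q}\leq t|f^{\prime }(x)|^{q}+m(1-t)|f^{\prime }(y)|^{q}.
\end{equation*}
The crucial observation is that each integrand admits \emph{two} natural decompositions of this form: the point $ub+(1-u)(a+b)/2$ equals $tx+m(1-t)y$ either for $(t,x,y)=(u,b,(a+b)/(2m))$ or for $(t,x,y)=(1-u,(a+b)/2,b/m)$. Integrating either bound over $u\in [0,1]$, and using $\int_{0}^{1}u\,du=\int_{0}^{1}(1-u)du=1/2$, delivers the two competing expressions in the minimum defining $\mu _{2}$; one keeps the smaller. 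The mirror computation for $I_{2}$, with $b$ replaced by $a$, produces $\mu _{1}$. Summing gives the first inequality in (\ref{B1}); the second is the trivial remark that $\bigl((q-1)/(2q-1)\bigr)^{(q-1)/q}\leq 1$ since $(q-1)/(2q-1)<1$.

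The main obstacle is not analytic but organizational: one must align the four $m$-convex decompositions so that the resulting pairs exactly reproduce the $\mu _{1}$ and $\mu _{2}$ in the statement, and track the factors of $2$ introduced by the midpoint substitutions carefully so that the single H\"{o}lder step yields the stated sharp constant rather than a stray power-of-two correction.
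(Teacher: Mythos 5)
Your proof is correct and follows essentially the same route the paper uses: the paper obtains this statement as the special case $\lambda=\mu$, $\alpha=1$ of Theorem \ref{2.1.1} (see Corollary \ref{2.b}), whose proof likewise splits the kernel integral at its zero (the midpoint here), applies H\"{o}lder with the full weight in the $L^{p}$ factor to produce the constant $\left(\frac{q-1}{2q-1}\right)^{\frac{q-1}{q}}$, and then bounds the resulting unweighted $L^{q}$ averages by the minimum of the two $m$-convex decompositions. The only cosmetic difference is that you integrate the pointwise $m$-convexity inequality directly, whereas the paper invokes Theorem \ref{A.1} applied to $\left\vert f^{\prime }\right\vert ^{q}$ on each half-interval, which is the same computation packaged as a lemma.
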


\begin{theorem}
Let $I$ be an open real interval such that $\left[ 0,\infty \right) \subset
I $. Let $f:I\mathbb{\rightarrow R}$ be a differentiable mapping on $I$ such
that $f^{\prime }\in L[a,b],$ where $0\leq a<b<\infty .$ If $\left\vert
f^{\prime }\right\vert ^{q}$ is $(\alpha ,m)$-convex on $[a,b]$, for some
fixed $\alpha ,m\in \left( 0,1\right] $ and $q\in \left[ 1,\infty \right) ,$
then%
\begin{eqnarray}
&&\left\vert \frac{f(a)+f(b)}{2}-\frac{1}{b-a}\dint\limits_{a}^{b}f(x)dx%
\right\vert \ \ \leq \frac{b-a}{2}\left( \frac{1}{2}\right) ^{1-\frac{1}{q}}
\label{1-4} \\
&&\ \times \min \left\{ \left( \nu _{1}\left\vert f^{\prime }(a)\right\vert
^{q}+m\nu _{2}\left\vert f^{\prime }(\frac{b}{m})\right\vert ^{q}\right) ^{%
\frac{1}{q}},~\left( \nu _{1}\left\vert f^{\prime }(b)\right\vert ^{q}+m\nu
_{2}\left\vert f^{\prime }(\frac{a}{m})\right\vert ^{q}\right) ^{\frac{1}{q}%
}\right\}  \notag
\end{eqnarray}

where%
\begin{equation*}
\nu _{1}=\frac{1}{\left( \alpha +1\right) \left( \alpha +2\right) }\left[
\alpha +\left( \frac{1}{2}\right) ^{\alpha }\right]
\end{equation*}%
and%
\begin{equation*}
\nu _{2}=\frac{1}{\left( \alpha +1\right) \left( \alpha +2\right) }\left[ 
\frac{\alpha ^{2}+\alpha +2}{2}-\left( \frac{1}{2}\right) ^{\alpha }\right] .
\end{equation*}
\end{theorem}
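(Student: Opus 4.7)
The plan is to follow the standard Dragomir–Agarwal scheme: start from the identity
\[
\frac{f(a)+f(b)}{2}-\frac{1}{b-a}\int_{a}^{b}f(x)\,dx
=\frac{b-a}{2}\int_{0}^{1}(1-2t)\,f'(ta+(1-t)b)\,dt,
\]
which follows by integration by parts and is the usual starting lemma for right–hand Hermite–Hadamard estimates. Taking absolute values, I get the majorant
\[
\frac{b-a}{2}\int_{0}^{1}|1-2t|\,|f'(ta+(1-t)b)|\,dt.
\]

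Next I would apply the power–mean inequality (equivalently, H\"older with the weight $|1-2t|$ split as $|1-2t|^{1-1/q}\cdot|1-2t|^{1/q}$) for $q\ge 1$, using that $\int_{0}^{1}|1-2t|\,dt=\tfrac12$. This produces the prefactor $(b-a)/2\cdot(1/2)^{1-1/q}$ and leaves me to bound
\[
\int_{0}^{1}|1-2t|\,|f'(ta+(1-t)b)|^{q}\,dt.
\]

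To obtain the minimum inside the conclusion, I would use the $(\alpha,m)$-convexity of $|f'|^{q}$ in two different writings of the argument. Writing $ta+(1-t)b=t\,a+m(1-t)\frac{b}{m}$ gives
\[
|f'(ta+(1-t)b)|^{q}\le t^{\alpha}|f'(a)|^{q}+m(1-t^{\alpha})\Big|f'\!\Big(\tfrac{b}{m}\Big)\Big|^{q},
\]
while writing $ta+(1-t)b=(1-t)\,b+m\,t\,\frac{a}{m}$ gives the symmetric bound with $a\leftrightarrow b$ and $t^{\alpha}$ replaced by $(1-t)^{\alpha}$. Both bounds are valid, so after integrating, the minimum of the two resulting expressions dominates the left-hand side, which matches the structure of \eqref{1-4}.

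The remaining (and only genuinely computational) step is to evaluate the moments
\[
\int_{0}^{1}|1-2t|\,t^{\alpha}\,dt \quad\text{and}\quad \int_{0}^{1}|1-2t|\,(1-t^{\alpha})\,dt,
\]
and to check they equal exactly $\nu_{1}$ and $\nu_{2}$. The substitution $t\mapsto 1-t$ shows that the second-writing integrals $\int_{0}^{1}|1-2t|(1-t)^{\alpha}dt$ and $\int_{0}^{1}|1-2t|(1-(1-t)^{\alpha})dt$ also equal $\nu_{1}$ and $\nu_{2}$, so both branches of the minimum have the same moment weights and only the $|f'|$-values differ. I expect the main obstacle to be nothing conceptually hard, but rather the bookkeeping in these integrals: one must split the interval at $t=\tfrac12$, carry the factors $(1/2)^{\alpha+1}$ and $(1/2)^{\alpha+2}$ correctly, and simplify to the closed forms $\nu_{1}=\frac{\alpha+(1/2)^{\alpha}}{(\alpha+1)(\alpha+2)}$ and $\nu_{2}=\frac{1}{(\alpha+1)(\alpha+2)}\!\left[\frac{\alpha^{2}+\alpha+2}{2}-(1/2)^{\alpha}\right]$. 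Once these match, inserting the bounds back into the power–mean estimate and taking the minimum of the two $(\alpha,m)$-convexity choices yields \eqref{1-4}.
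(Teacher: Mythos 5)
Your proof is correct and takes essentially the same route as the paper: the paper obtains (\ref{1-4}) as the case $\lambda=\mu$ of its weighted Theorem~\ref{1.1} (Corollary~\ref{2.a}, item 1), and the proof of that theorem rests on the same ingredients you use --- an integral identity whose kernel at $\lambda=\mu$ reduces to your $1-2t$, the power-mean (H\"{o}lder) step with $\int_0^1|1-2t|\,dt=\tfrac12$, and the two symmetric applications of $(\alpha,m)$-convexity that produce the minimum. Your moment evaluations giving $\nu_1$ and $\nu_2$ are correct, so no gap remains.
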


The main aim of this paper is to establish new inequalities of
Hermite-Hadamard type for the class of functions whose derivatives in
absolutely value at certain powers are $(\alpha ,m)$-convex.

\section{Inequalities for functions whose derivatives are $(\protect\alpha %
,m)$-convex}

In order to prove our main resuls we need the following lemma:

\begin{lemma}
\label{2.1}Let $f:I\subset \mathbb{R\rightarrow R}$ be a differentiable
mapping on $I^{\circ }$, $a,b\in I$ with $a<b$. If $f^{\prime }\in L[a,b]$
and $\lambda ,\mu \in \left[ 0,\infty \right) ,~\lambda +\mu >0,$ then the
following equality holds:%
\begin{equation*}
\frac{\lambda f(a)+\mu f(b)}{\lambda +\mu }-\frac{1}{b-a}\dint%
\limits_{a}^{b}f(x)dx=\frac{b-a}{\lambda +\mu }\dint\limits_{0}^{1}\left[
(\lambda +\mu )t-\lambda \right] f^{\prime }(tb+(1-t)a)dt
\end{equation*}
\end{lemma}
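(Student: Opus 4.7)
The plan is to verify the identity by directly evaluating the right-hand side integral via integration by parts and showing it reduces to the left-hand side, so I would start from the right-hand side rather than the left.

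First I would set
\[
I := \int_{0}^{1}\left[ (\lambda +\mu )t-\lambda \right] f^{\prime }(tb+(1-t)a)\,dt
\]
and apply integration by parts with $u(t)=(\lambda+\mu)t-\lambda$ (so $du=(\lambda+\mu)\,dt$) and $dv = f'(tb+(1-t)a)\,dt$ (so $v = \frac{1}{b-a}f(tb+(1-t)a)$, using $b-a\neq 0$). The boundary term evaluates cleanly: at $t=1$ the bracket equals $\mu$, and at $t=0$ it equals $-\lambda$, giving the contribution $\frac{\mu f(b)+\lambda f(a)}{b-a}$.

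Next I would handle the remaining integral $\frac{\lambda+\mu}{b-a}\int_{0}^{1} f(tb+(1-t)a)\,dt$ using the affine change of variable $x=tb+(1-t)a$, $dx=(b-a)\,dt$, which converts it to $\frac{\lambda+\mu}{(b-a)^{2}}\int_{a}^{b}f(x)\,dx$. Combining,
\[
I = \frac{\lambda f(a)+\mu f(b)}{b-a} - \frac{\lambda+\mu}{(b-a)^{2}}\int_{a}^{b}f(x)\,dx.
\]
Multiplying through by $\frac{b-a}{\lambda+\mu}$ (which is legitimate since $\lambda+\mu>0$) yields exactly the claimed identity.

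There is essentially no obstacle — the only mildly delicate points are checking that the hypothesis $f'\in L[a,b]$ justifies the integration by parts (the antiderivative $f$ is absolutely continuous, so the boundary-plus-integral formula is valid), and noting that the assumption $\lambda+\mu>0$ is exactly what is needed to divide at the end and to make the left-hand side meaningful. The argument is otherwise a routine computation.
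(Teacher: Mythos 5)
Your proposal is correct and follows essentially the same route as the paper's own proof: integration by parts on $I=\int_{0}^{1}[(\lambda+\mu)t-\lambda]f'(tb+(1-t)a)\,dt$, the substitution $x=tb+(1-t)a$, and multiplication by $\tfrac{b-a}{\lambda+\mu}$. Your added remarks on why $f'\in L[a,b]$ and $\lambda+\mu>0$ justify the steps are a small bonus over the paper's terser presentation, but the argument is the same.
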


\begin{proof}
integration by parts we have%
\begin{eqnarray*}
I &=&\dint\limits_{0}^{1}\left[ (\lambda +\mu )t-\lambda \right] f^{\prime
}(tb+(1-t)a)dt \\
&=&\left[ (\lambda +\mu )t-\lambda \right] \frac{f(tb+(1-t)a)}{b-a}\mid
_{0}^{1}-\frac{\lambda +\mu }{b-a}\dint\limits_{0}^{1}f(tb+(1-t)a)dt \\
&=&\frac{\lambda f(a)+\mu f(b)}{b-a}-\frac{\lambda +\mu }{b-a}%
\dint\limits_{0}^{1}f(tb+(1-t)a)dt
\end{eqnarray*}

Setting $x=tb+(1-t)a$, and $dx=\left( b-a\right) dt$ gives%
\begin{equation*}
I=\frac{\lambda f(a)+\mu f(b)}{b-a}-\frac{\lambda +\mu }{\left( b-a\right)
^{2}}\dint\limits_{a}^{b}f(x)dx.
\end{equation*}%
Therefore,%
\begin{equation*}
\left( \frac{b-a}{\lambda +\mu }\right) I=\frac{\lambda f(a)+\mu f(b)}{%
\lambda +\mu }-\frac{1}{b-a}\dint\limits_{a}^{b}f(x)dx
\end{equation*}

which completes the proof.
\end{proof}

\bigskip

The next theorem gives a new refinement of the upper Hermite-Hadamard
inequality for $(\alpha ,m)$-convex functions.

\begin{theorem}
\label{1.1}Let $f:I\subset \left[ 0,\infty \right) \mathbb{\rightarrow R}$
be a differentiable mapping on $I^{\circ }$ such that $f^{\prime }\in
L[a,b], $ where $a,b\in I^{\circ }$ with $a<b.$ If $\left\vert f^{\prime
}\right\vert ^{q}$ is $(\alpha ,m)$-convex on $[a,b]$, for some fixed $%
\left( \alpha ,m\right) \in \left( 0,1\right] ^{2},~\lambda ,\mu \in \left[
0,\infty \right) $ with $\lambda +\mu >0,$ and $q\geq 1$, then the following
inequality holds:%
\begin{eqnarray}
&&\left\vert \frac{\lambda f(a)+\mu f(b)}{\lambda +\mu }-\frac{1}{b-a}%
\dint\limits_{a}^{b}f(x)dx\right\vert \leq \frac{b-a}{\lambda +\mu }\left( 
\frac{\lambda ^{2}+\mu ^{2}}{2\left( \lambda +\mu \right) }\right) ^{\frac{%
q-1}{q}}  \label{2-2} \\
&&\times \min \left\{ \left( \gamma _{1}\left\vert f^{\prime }(b)\right\vert
^{q}+m\gamma _{2}\left\vert f^{\prime }(\frac{a}{m})\right\vert ^{q}\right)
^{\frac{1}{q}},~\left( \gamma _{3}\left\vert f^{\prime }(a)\right\vert
^{q}+m\gamma _{4}\left\vert f^{\prime }(\frac{b}{m})\right\vert ^{q}\right)
^{\frac{1}{q}}\right\}  \notag
\end{eqnarray}%
where%
\begin{equation*}
\gamma _{1}=\frac{1}{\left( \alpha +1\right) \left( \alpha +2\right) }\left[ 
\frac{2\lambda ^{\alpha +2}}{\left( \lambda +\mu \right) ^{\alpha +1}}%
+\left( \alpha +1\right) \mu -\lambda \right] ,~\gamma _{2}=\frac{\lambda
^{2}+\mu ^{2}}{2\left( \lambda +\mu \right) }-\gamma _{1},
\end{equation*}%
and%
\begin{equation*}
\gamma _{3}=\frac{1}{\left( \alpha +1\right) \left( \alpha +2\right) }\left[ 
\frac{2\mu ^{\alpha +2}}{\left( \lambda +\mu \right) ^{\alpha +1}}+\left(
\alpha +1\right) \lambda -\mu \right] \text{, }\gamma _{4}=\frac{\lambda
^{2}+\mu ^{2}}{2\left( \lambda +\mu \right) }-\gamma _{3}.
\end{equation*}

\begin{proof}
Suppose that $q=1$. From Lemma \ref{2.1} and using the $(\alpha ,m)$%
-convexity $\left\vert f^{\prime }\right\vert $, we have%
\begin{eqnarray*}
&&\left\vert \frac{\lambda f(a)+\mu f(b)}{\lambda +\mu }-\frac{1}{b-a}%
\dint\limits_{a}^{b}f(x)dx\right\vert \\
&\leq &\frac{b-a}{\lambda +\mu }\dint\limits_{0}^{1}\left\vert (\lambda +\mu
)t-\lambda \right\vert \left\vert f^{\prime }(tb+(1-t)a)\right\vert dt \\
&\leq &\frac{b-a}{\lambda +\mu }\dint\limits_{0}^{1}\left\vert (\lambda +\mu
)t-\lambda \right\vert \left[ t^{\alpha }\left\vert f^{\prime
}(b)\right\vert +m(1-t^{\alpha })\right] \left\vert f^{\prime }(\frac{a}{m}%
)\right\vert )dt \\
&=&\frac{b-a}{\lambda +\mu }\dint\limits_{0}^{1}\left\vert (\lambda +\mu
)t-\lambda \right\vert t^{\alpha }\left\vert f^{\prime }(b)\right\vert
+m(1-t^{\alpha })\left\vert (\lambda +\mu )t-\lambda \right\vert \left\vert
f^{\prime }(\frac{a}{m})\right\vert dt
\end{eqnarray*}

We have 
\begin{eqnarray*}
\dint\limits_{0}^{1}\left\vert (\lambda +\mu )t-\lambda \right\vert
t^{\alpha }dt &=&\dint\limits_{0}^{\frac{\lambda }{\lambda +\mu }}\left[
\lambda -(\lambda +\mu )t\right] t^{\alpha }dt+\dint\limits_{\frac{\lambda }{%
\lambda +\mu }}^{1}\left[ (\lambda +\mu )t-\lambda \right] t^{\alpha }dt \\
&=&\frac{1}{\left( \alpha +1\right) \left( \alpha +2\right) }\left[ \frac{%
2\lambda ^{\alpha +2}}{\left( \lambda +\mu \right) ^{\alpha +1}}+\left(
\alpha +1\right) \mu -\lambda \right] =\gamma _{1}
\end{eqnarray*}

and 
\begin{equation*}
\dint\limits_{0}^{1}\left\vert (\lambda +\mu )t-\lambda \right\vert \left(
1-t^{\alpha }\right) dt=\frac{\lambda ^{2}+\mu ^{2}}{2\left( \lambda +\mu
\right) }-\gamma _{1}=\gamma _{2},
\end{equation*}

hence%
\begin{equation*}
\left\vert \frac{\lambda f(a)+\mu f(b)}{\lambda +\mu }-\frac{1}{b-a}%
\dint\limits_{a}^{b}f(x)dx\right\vert \leq \frac{b-a}{\lambda +\mu }\left(
\gamma _{1}\left\vert f^{\prime }(b)\right\vert +m\gamma _{2}\left\vert
f^{\prime }(\frac{a}{m})\right\vert \right) .
\end{equation*}

Since%
\begin{equation*}
\dint\limits_{0}^{1}\left\vert (\lambda +\mu )t-\lambda \right\vert
\left\vert f^{\prime }(tb+(1-t)a)\right\vert
dt=\dint\limits_{0}^{1}\left\vert (\lambda +\mu )t-\mu \right\vert
\left\vert f^{\prime }(ta+(1-t)b)\right\vert dt
\end{equation*}

Analogously we obtain%
\begin{equation*}
\left\vert \frac{\lambda f(a)+\mu f(b)}{\lambda +\mu }-\frac{1}{b-a}%
\dint\limits_{a}^{b}f(x)dx\right\vert \leq \frac{b-a}{\lambda +\mu }\left(
\gamma _{3}\left\vert f^{\prime }(a)\right\vert +m\gamma _{4}\left\vert
f^{\prime }(\frac{b}{m})\right\vert \right) ,
\end{equation*}

where%
\begin{equation*}
\gamma _{3}=\frac{1}{\left( \alpha +1\right) \left( \alpha +2\right) }\left[ 
\frac{2\lambda ^{\alpha +2}}{\left( \lambda +\mu \right) ^{\alpha +1}}%
+\left( \alpha +1\right) \lambda -\mu \right] \text{ and }\gamma _{4}=\frac{%
\lambda ^{2}+\mu ^{2}}{2\left( \lambda +\mu \right) }-\gamma _{3}
\end{equation*}

which copmletes the proof for this case.

Suppose now that $q\in \left( 1,\infty \right) $. From Lemma \ref{2.1} and
using the H\"{o}lder's integral inequality, we have%
\begin{eqnarray}
&&\dint\limits_{0}^{1}\left\vert (\lambda +\mu )t-\lambda \right\vert
\left\vert f^{\prime }(tb+(1-t)a)\right\vert dt  \label{2} \\
&\leq &\frac{b-a}{\lambda +\mu }\left( \dint\limits_{0}^{1}\left\vert
(\lambda +\mu )t-\lambda \right\vert dt\right) ^{\frac{q-1}{q}}\left(
\dint\limits_{0}^{1}\left\vert (\lambda +\mu )t-\lambda \right\vert
\left\vert f^{\prime }(tb+(1-t)a)\right\vert ^{q}dt\right) ^{\frac{1}{q}} 
\notag
\end{eqnarray}

Since $\left\vert f^{\prime }\right\vert ^{q}$ is $(\alpha ,m)$-convex on $%
[a,b]$, we know that for every $t\in \left[ 0,1\right] $%
\begin{equation}
\left\vert f^{\prime }(tb+m(1-t)\frac{a}{m})\right\vert ^{q}\leq t^{\alpha
}\left\vert f^{\prime }(b)\right\vert ^{q}+m(1-t^{\alpha })\left\vert
f^{\prime }(\frac{a}{m})\right\vert ^{q}.  \label{3}
\end{equation}%
From the inequalities (\ref{1}), (\ref{2}) and (\ref{3}), we have%
\begin{equation*}
\left\vert \frac{\lambda f(a)+\mu f(b)}{\lambda +\mu }-\frac{1}{b-a}%
\dint\limits_{a}^{b}f(x)dx\right\vert \leq \frac{b-a}{\lambda +\mu }\left( 
\frac{\lambda ^{2}+\mu ^{2}}{2\left( \lambda +\mu \right) }\right) ^{\frac{%
q-1}{q}}\left( \gamma _{1}\left\vert f^{\prime }(b)\right\vert ^{q}+m\gamma
_{2}\left\vert f^{\prime }(\frac{a}{m})\right\vert ^{q}\right) ^{\frac{1}{q}}
\end{equation*}

and analogously%
\begin{equation*}
\left\vert \frac{\lambda f(a)+\mu f(b)}{\lambda +\mu }-\frac{1}{b-a}%
\dint\limits_{a}^{b}f(x)dx\right\vert \leq \frac{b-a}{\lambda +\mu }\left( 
\frac{\lambda ^{2}+\mu ^{2}}{2\left( \lambda +\mu \right) }\right) ^{\frac{%
q-1}{q}}\left( \gamma _{3}\left\vert f^{\prime }(a)\right\vert ^{q}+m\gamma
_{4}\left\vert f^{\prime }(\frac{b}{m})\right\vert ^{q}\right) ^{\frac{1}{q}}
\end{equation*}

which completes the proof.
\end{proof}

\begin{corollary}
\label{2.a}Suppose that all the assumptions of Theorem\ref{1.1} are
satisfied,

\begin{enumerate}
\item In the inequality (\ref{2-2}) If we choose $\lambda =\mu $ , we obtain
the inequality in (\ref{1-4}).

\item In the inequality (\ref{2-2}) If we choose $\lambda =\mu ,$ $m=1,~q=1~$%
and $\alpha =1$ we obtain the inequality in (\ref{1-2}).

\item In the inequality (\ref{2-2}) If we choose $m=\alpha =1$ we have%
\begin{eqnarray}
&&\left\vert \frac{\lambda f(a)+\mu f(b)}{\lambda +\mu }-\frac{1}{b-a}%
\dint\limits_{a}^{b}f(x)dx\right\vert  \label{2-a} \\
&\leq &\frac{b-a}{\lambda +\mu }\left( \frac{\lambda ^{2}+\mu ^{2}}{2\left(
\lambda +\mu \right) }\right) ^{\frac{q-1}{q}}  \notag \\
&&\times \min \left\{ \left( \gamma _{1}\left\vert f^{\prime }(b)\right\vert
^{q}+\gamma _{2}\left\vert f^{\prime }(a)\right\vert ^{q}\right) ^{\frac{1}{q%
}},~\left( \gamma _{3}\left\vert f^{\prime }(a)\right\vert ^{q}+\gamma
_{4}\left\vert f^{\prime }(b)\right\vert ^{q}\right) ^{\frac{1}{q}}\right\} 
\notag
\end{eqnarray}
\end{enumerate}
\end{corollary}
\end{theorem}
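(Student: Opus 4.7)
The plan is to start from the integral identity in Lemma~\ref{2.1}, which rewrites the quantity on the left as
$\frac{b-a}{\lambda+\mu}\int_0^1 [(\lambda+\mu)t-\lambda] f'(tb+(1-t)a)\, dt$,
and then apply the triangle inequality to pass the absolute value inside the integral. From there the proof splits naturally into $q=1$ and $q>1$. In both cases, the key is that the argument $tb+(1-t)a$ can be written as $tb + m(1-t)(a/m)$, so $(\alpha,m)$-convexity of $|f'|^q$ yields
$|f'(tb+(1-t)a)|^q \le t^\alpha |f'(b)|^q + m(1-t^\alpha)|f'(a/m)|^q$,
which reduces everything to computing moments of the weight $w(t) = |(\lambda+\mu)t-\lambda|$ against $t^\alpha$ and $1-t^\alpha$.

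For $q=1$, plugging the convexity estimate in directly gives the bound as a linear combination of $\gamma_1 := \int_0^1 w(t)\, t^\alpha\, dt$ and $\gamma_2 := \int_0^1 w(t)(1-t^\alpha)\, dt$. For $q>1$, I would apply Hölder's inequality in its weighted form, writing $w = w^{(q-1)/q} \cdot w^{1/q}$, to obtain
$\int_0^1 w(t)|f'(tb+(1-t)a)|\, dt \le \bigl(\int_0^1 w(t)\, dt\bigr)^{(q-1)/q}\bigl(\int_0^1 w(t)|f'(tb+(1-t)a)|^q dt\bigr)^{1/q}$,
then invoke $(\alpha,m)$-convexity of $|f'|^q$ inside the second factor; the first factor contributes the explicit value $\int_0^1 w(t)\, dt = (\lambda^2+\mu^2)/(2(\lambda+\mu))$, which matches the exponent $(q-1)/q$ in the theorem. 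This produces the first branch $(\gamma_1|f'(b)|^q + m\gamma_2|f'(a/m)|^q)^{1/q}$ of the minimum.

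To obtain the second branch with $|f'(a)|^q$ and $|f'(b/m)|^q$, I would use the substitution $s = 1-t$, which turns $f'(tb+(1-t)a)$ into $f'(sa+(1-s)b)$ and the weight $|(\lambda+\mu)t-\lambda|$ into $|(\lambda+\mu)s-\mu|$; the whole argument then repeats verbatim with $a\leftrightarrow b$ and $\lambda\leftrightarrow\mu$, giving the same bound with $\gamma_1,\gamma_2$ replaced by $\gamma_3,\gamma_4$. Taking the minimum of the two yields the stated inequality.

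The main obstacle is the explicit evaluation of $\gamma_1$. The weight $w(t)$ changes sign at $t_0 = \lambda/(\lambda+\mu)$, so I would split the integral as $\int_0^{t_0}[\lambda - (\lambda+\mu)t]t^\alpha dt + \int_{t_0}^1 [(\lambda+\mu)t-\lambda] t^\alpha dt$, integrate the elementary polynomial pieces, and collect the $t_0^{\alpha+1}$ and $t_0^{\alpha+2}$ terms to arrive at the compact form $\frac{1}{(\alpha+1)(\alpha+2)}\bigl[\frac{2\lambda^{\alpha+2}}{(\lambda+\mu)^{\alpha+1}} + (\alpha+1)\mu - \lambda\bigr]$. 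Once $\gamma_1$ is in hand, $\gamma_2$ is immediate from the identity $\gamma_1+\gamma_2 = \int_0^1 w(t)\, dt = (\lambda^2+\mu^2)/(2(\lambda+\mu))$, and $\gamma_3,\gamma_4$ follow by the $\lambda\leftrightarrow\mu$ symmetry noted above; everything else is bookkeeping.
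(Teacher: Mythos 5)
Your proposal is correct and follows essentially the same route as the paper: Lemma \ref{2.1}, the power-mean (weighted H\"{o}lder) splitting $w=w^{(q-1)/q}w^{1/q}$ with $\int_0^1 w(t)\,dt=\frac{\lambda^2+\mu^2}{2(\lambda+\mu)}$, the $(\alpha,m)$-convexity bound on $|f'|^q$ via $tb+m(1-t)\frac{a}{m}$, evaluation of $\gamma_1$ by splitting at $t_0=\frac{\lambda}{\lambda+\mu}$, and the substitution $s=1-t$ for the second branch of the minimum. Your $\lambda\leftrightarrow\mu$ symmetry even yields the correct $\gamma_3$ as stated in the theorem, quietly fixing a typo in the paper's own proof where $\gamma_3$ is written with $\lambda^{\alpha+2}$ instead of $\mu^{\alpha+2}$.
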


where%
\begin{equation*}
\gamma _{1}=\frac{1}{6}\left[ \frac{2\lambda ^{3}}{\left( \lambda +\mu
\right) ^{2}}+2\mu -\lambda \right] ,~\gamma _{2}=\frac{\lambda ^{2}+\mu ^{2}%
}{2\left( \lambda +\mu \right) }-\gamma _{1},
\end{equation*}%
and%
\begin{equation*}
\gamma _{3}=\frac{1}{6}\left[ \frac{2\mu ^{3}}{\left( \lambda +\mu \right)
^{2}}+2\lambda -\mu \right] ,\gamma _{4}=\frac{\lambda ^{2}+\mu ^{2}}{%
2\left( \lambda +\mu \right) }-\gamma _{3}.
\end{equation*}

\begin{theorem}
\label{2.1.1}Let $f:I\subset \left[ 0,\infty \right) \mathbb{\rightarrow R}$
be a differentiable mapping on $I^{\circ }$ such that $f^{\prime }\in
L[a,b], $ where $a,b\in I^{\circ }$ with $a<b.$ If $\left\vert f^{\prime
}\right\vert ^{q}$ is $(\alpha ,m)$-convex on $[a,b]$, for some fixed $%
\left( \alpha ,m\right) \in \left( 0,1\right] ^{2},~\lambda ,\mu \in \left[
0,\infty \right) $ with $\lambda +\mu >0,$ and $q>1$, then the following
inequality holds:%
\begin{eqnarray}
&&\left\vert \frac{\lambda f(a)+\mu f(b)}{\lambda +\mu }-\frac{1}{b-a}%
\dint\limits_{a}^{b}f(x)dx\right\vert \leq ~\frac{b-a}{\left( \lambda +\mu
\right) ^{2}}  \label{2-4} \\
&&~\times \left( \frac{1}{p+1}\right) ^{\frac{1}{p}}\left[ \lambda
^{2}M_{1}^{\frac{1}{q}}+\mu ^{2}M_{2}^{\frac{1}{q}}\right]  \notag
\end{eqnarray}%
where%
\begin{eqnarray*}
M_{1} &=&\min \left\{ \frac{\left\vert f^{\prime }(a)\right\vert ^{q}+\alpha
m\left\vert f^{\prime }\left( \frac{\lambda b+\mu a}{m(\lambda +\mu )}%
\right) \right\vert ^{q}}{\alpha +1},~\frac{\left\vert f^{\prime }(\frac{%
\lambda b+\mu a}{\lambda +\mu })\right\vert ^{q}+\alpha m\left\vert
f^{\prime }\left( \frac{a}{m}\right) \right\vert ^{q}}{\alpha +1}\right\} \\
M_{2} &=&\min \left\{ \frac{\left\vert f^{\prime }(b)\right\vert ^{q}+\alpha
m\left\vert f^{\prime }\left( \frac{\lambda b+\mu a}{m(\lambda +\mu )}%
\right) \right\vert ^{q}}{\alpha +1},~\frac{\left\vert f^{\prime }(\frac{%
\lambda b+\mu a}{\lambda +\mu })\right\vert ^{q}+\alpha m\left\vert
f^{\prime }\left( \frac{b}{m}\right) \right\vert ^{q}}{\alpha +1}\right\}
\end{eqnarray*}%
and \ $\frac{1}{p}+\frac{1}{q}=1.$
\end{theorem}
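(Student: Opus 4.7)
The plan is to apply Lemma \ref{2.1} to rewrite the quantity of interest as a weighted integral of $f'(tb+(1-t)a)$, and then to split that integral at $t_0 = \lambda/(\lambda+\mu)$, which is exactly the point where the weight $(\lambda+\mu)t - \lambda$ changes sign. After this split, the kernel becomes a single-signed power on each subinterval, which is the natural form for applying H\"{o}lder's inequality with the conjugate exponents $p,q$ from the statement.

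First I would apply H\"{o}lder on each of the two pieces separately. A direct substitution gives
\begin{equation*}
\int_{0}^{\lambda/(\lambda+\mu)}\bigl(\lambda-(\lambda+\mu)t\bigr)^{p}dt = \frac{\lambda^{p+1}}{(p+1)(\lambda+\mu)},\qquad \int_{\lambda/(\lambda+\mu)}^{1}\bigl((\lambda+\mu)t-\lambda\bigr)^{p}dt = \frac{\mu^{p+1}}{(p+1)(\lambda+\mu)},
\end{equation*}
so that the $p$-norm factors produce $\lambda^{1+1/p}$ and $\mu^{1+1/p}$ (up to the $(p+1)^{-1/p}$ and $(\lambda+\mu)^{-1/p}$ factors) respectively.

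Next I would handle the remaining $L^{q}$-type integrals $\int_0^{t_0} |f'(tb+(1-t)a)|^q dt$ and $\int_{t_0}^1 |f'(tb+(1-t)a)|^q dt$. The key idea is the change of variable $x = tb+(1-t)a$, which sends $[0,t_0]$ to $[a,c]$ and $[t_0,1]$ to $[c,b]$, where $c := (\lambda b+\mu a)/(\lambda+\mu)$; note $c-a = \lambda(b-a)/(\lambda+\mu)$ and $b-c = \mu(b-a)/(\lambda+\mu)$. Then I would apply Theorem \ref{A.1} (the Hermite--Hadamard inequality for $(\alpha,m)$-convex functions) to $|f'|^q$ on the subintervals $[a,c]$ and $[c,b]$ separately. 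The two minima appearing there are precisely $M_{1}$ and $M_{2}$ as defined in the statement, once one observes that $c/m = (\lambda b+\mu a)/(m(\lambda+\mu))$.

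Combining these ingredients, each H\"{o}lder piece contributes $\lambda^{1+1/p}\cdot(\lambda/(\lambda+\mu))^{1/q} M_1^{1/q}$ and $\mu^{1+1/p}\cdot(\mu/(\lambda+\mu))^{1/q} M_2^{1/q}$ respectively (times the common factor $(p+1)^{-1/p}(\lambda+\mu)^{-1/p}$). Using $1+1/p+1/q = 2$ and $1/p+1/q=1$, the exponents collapse cleanly to $\lambda^2$ and $\mu^2$, producing $\lambda^2 M_1^{1/q} + \mu^2 M_2^{1/q}$ divided by $(p+1)^{1/p}(\lambda+\mu)$. Multiplying by the prefactor $(b-a)/(\lambda+\mu)$ from Lemma \ref{2.1} yields exactly (\ref{2-4}). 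The main obstacle is the bookkeeping: choosing the split point $t_0$ so the weights become pure powers, and then matching the subinterval endpoints to the arguments of $|f'|$ inside $M_1$ and $M_2$; once one identifies that Theorem \ref{A.1} applied to $|f'|^{q}$ on $[a,c]$ and $[c,b]$ gives precisely these two quantities, the exponent arithmetic is routine.
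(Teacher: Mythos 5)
Your proposal matches the paper's own proof essentially step for step: the same split of the kernel at $t_0=\lambda/(\lambda+\mu)$, the same H\"{o}lder application on each piece with the same computation of the $p$-norm factors, and the same use of Theorem \ref{A.1} applied to $\left\vert f^{\prime}\right\vert^{q}$ on $[a,c]$ and $[c,b]$ to produce $M_{1}$ and $M_{2}$. The exponent bookkeeping you describe is exactly how the paper collapses the powers to $\lambda^{2}$ and $\mu^{2}$, so there is nothing to add.
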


\begin{proof}
From Lemma \ref{2.1} and using the H\"{o}lder inequality, we have%
\begin{eqnarray*}
&&\left\vert \frac{\lambda f(a)+\mu f(b)}{\lambda +\mu }-\frac{1}{b-a}%
\dint\limits_{a}^{b}f(x)dx\right\vert \\
&\leq &\frac{b-a}{\lambda +\mu }\left( \dint\limits_{0}^{\frac{\lambda }{%
\lambda +\mu }}\left[ \lambda -(\lambda +\mu )t\right] ^{p}dt\right) ^{\frac{%
1}{p}}\left( \dint\limits_{0}^{\frac{\lambda }{\lambda +\mu }}\left\vert
f^{\prime }(tb+(1-t)a)\right\vert ^{q}~dt\right) ^{\frac{1}{q}} \\
&&+\frac{b-a}{\lambda +\mu }\left( \dint\limits_{\frac{\lambda }{\lambda
+\mu }}^{1}\left[ (\lambda +\mu )t-\lambda \right] ^{p}dt\right) ^{\frac{1}{p%
}}\left( \dint\limits_{\frac{\lambda }{\lambda +\mu }}^{1}\left\vert
f^{\prime }(tb+(1-t)a)\right\vert ^{q}~dt\right) ^{\frac{1}{q}} \\
&\leq &\frac{b-a}{\lambda +\mu }\left[ \left( \frac{\lambda ^{p+1}}{%
(p+1)(\lambda +\mu )}\right) ^{\frac{1}{p}}\left( \frac{\lambda }{\lambda
+\mu }M_{1}\right) ^{\frac{1}{q}}+\left( \frac{\mu ^{p+1}}{(p+1)(\lambda
+\mu )}\right) ^{\frac{1}{p}}\left( \frac{\mu }{\lambda +\mu }M_{2}\right) ^{%
\frac{1}{q}}\right] \\
&=&\frac{b-a}{\lambda +\mu }\times \left( \frac{1}{p+1}\right) ^{\frac{1}{p}}%
\left[ \frac{\lambda ^{2}}{\lambda +\mu }M_{1}^{\frac{1}{q}}+\frac{\mu ^{2}}{%
\lambda +\mu }M_{2}^{\frac{1}{q}}\right]
\end{eqnarray*}%
where we use the fact that%
\begin{eqnarray*}
\dint\limits_{0}^{\frac{\lambda }{\lambda +\mu }}\left[ \lambda -(\lambda
+\mu )t\right] ^{p}dt &=&\frac{\lambda ^{p+1}}{(p+1)(\lambda +\mu )}, \\
\dint\limits_{\frac{\lambda }{\lambda +\mu }}^{1}\left[ (\lambda +\mu
)t-\lambda \right] ^{p} &=&\frac{\mu ^{p+1}}{(p+1)(\lambda +\mu )}
\end{eqnarray*}%
and by Theorem\ref{A.1} we get%
\begin{eqnarray*}
&&\frac{\lambda +\mu }{\lambda }\dint\limits_{0}^{\frac{\lambda }{\lambda
+\mu }}\left\vert f^{\prime }(tb+(1-t)a)\right\vert ^{q}~dt \\
&=&\frac{1}{\frac{\lambda }{\lambda +\mu }(b-a)}\dint\limits_{a}^{\frac{%
\lambda b+\mu a}{\lambda +\mu }}\left\vert f^{\prime }(x)\right\vert ^{q}~dx
\\
&\leq &\min \left\{ \frac{\left\vert f^{\prime }(a)\right\vert ^{q}+\alpha
m\left\vert f^{\prime }\left( \frac{\lambda b+\mu a}{m(\lambda +\mu )}%
\right) \right\vert ^{q}}{\alpha +1},~\frac{\left\vert f^{\prime }(\frac{%
\lambda b+\mu a}{\lambda +\mu })\right\vert ^{q}+\alpha m\left\vert
f^{\prime }\left( \frac{a}{m}\right) \right\vert ^{q}}{\alpha +1}\right\} ,
\\
&&\frac{\lambda +\mu }{\mu }\dint\limits_{\frac{\lambda }{\lambda +\mu }%
}^{1}\left\vert f^{\prime }(tb+(1-t)a)\right\vert ^{q}~dt \\
&=&\frac{1}{\frac{\mu }{\lambda +\mu }(b-a)}\dint\limits_{\frac{\lambda
b+\mu a}{\lambda +\mu }}^{b}\left\vert f^{\prime }(x)\right\vert ^{q}~dx \\
&\leq &\min \left\{ \frac{\left\vert f^{\prime }(b)\right\vert ^{q}+\alpha
m\left\vert f^{\prime }\left( \frac{\lambda b+\mu a}{m(\lambda +\mu )}%
\right) \right\vert ^{q}}{\alpha +1},~\frac{\left\vert f^{\prime }(\frac{%
\lambda b+\mu a}{\lambda +\mu })\right\vert ^{q}+\alpha m\left\vert
f^{\prime }\left( \frac{b}{m}\right) \right\vert ^{q}}{\alpha +1}\right\} .
\end{eqnarray*}%
which completes the proof.
\end{proof}

\begin{corollary}
\label{2.b}Suppose that all the assumptions of Theorem\ref{2.1.1} are
satisfied, in this case:
\end{corollary}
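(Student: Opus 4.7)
The plan is to derive the advertised specializations of inequality (\ref{2-4}) by direct substitution of particular values of the parameters $\lambda,\mu,\alpha,m$, mirroring the template of Corollary \ref{2.a} for Theorem \ref{1.1}. By analogy, the natural subcases are: (i) $\lambda=\mu$, which should collapse (\ref{2-4}) into a midpoint-symmetric refinement of the right-hand Hermite--Hadamard inequality closely related to (\ref{B1}); (ii) $\lambda=\mu$ together with $m=\alpha=1$, which should recover a classical Dragomir--Agarwal style bound; and (iii) $m=\alpha=1$ with $\lambda,\mu$ left free, giving a two-parameter weighted inequality purely in terms of $|f'(a)|^q$, $|f'(b)|^q$, and $|f'(\tfrac{\lambda b+\mu a}{\lambda+\mu})|^q$.

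For each subcase I would first substitute into the prefactor $\frac{b-a}{(\lambda+\mu)^2}\bigl(\frac{1}{p+1}\bigr)^{1/p}\bigl[\lambda^2 M_1^{1/q}+\mu^2 M_2^{1/q}\bigr]$. When $\lambda=\mu$, the quotient $\lambda^2/(\lambda+\mu)^2$ simplifies to $1/4$, yielding the constant $\frac{b-a}{4}\bigl(\frac{1}{p+1}\bigr)^{1/p}\bigl[M_1^{1/q}+M_2^{1/q}\bigr]$, while the interior point $\frac{\lambda b+\mu a}{\lambda+\mu}$ collapses to $\frac{a+b}{2}$; the quantities $M_1,M_2$ then read off directly as the $(\alpha,m)$-analogues of the $\mu_1,\mu_2$ appearing in (\ref{B1}).

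Next I would specialize (ii) further by setting $m=\alpha=1$ so that both branches of the $\min$ in $M_1$ and $M_2$ coincide, producing the expected symmetric form. Case (iii) is handled by the same direct substitution, replacing $f'(a/m),f'(b/m)$ by $f'(a),f'(b)$ and collapsing the $(\alpha,m)$-convexity bound to ordinary convexity inside $M_1,M_2$. At each stage the comparison with (\ref{1-2}), (\ref{1-4}), or (\ref{B1}) is an elementary identification of constants.

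The main obstacle here is not conceptual but bookkeeping: one must keep the two branches of each $\min$ in $M_1$ and $M_2$ straight, check that the $\min$ genuinely collapses in the symmetric specializations, and respect the fact that Theorem \ref{2.1.1} requires $q>1$ (so the $q=1$ case of the Dragomir--Agarwal bound (\ref{1-2}), if desired, must be extracted from Theorem \ref{1.1} via Corollary \ref{2.a} rather than from (\ref{2-4})). Provided these checks are carried out, each subcase follows immediately from (\ref{2-4}) without any further analytic work.
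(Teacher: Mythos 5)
Your plan is exactly the paper's (implicit) argument: Corollary \ref{2.b} is proved by direct substitution into (\ref{2-4}), using $\lambda^{2}/(\lambda+\mu)^{2}=\tfrac14$ and $\tfrac{\lambda b+\mu a}{\lambda+\mu}=\tfrac{a+b}{2}$ when $\lambda=\mu$, identifying $\bigl(\tfrac{1}{p+1}\bigr)^{1/p}=\bigl(\tfrac{q-1}{2q-1}\bigr)^{\frac{q-1}{q}}$, and observing that the two branches of each $\min$ coincide when $m=\alpha=1$. The only small correction is that the paper's first item requires both $\lambda=\mu$ and $\alpha=1$ to land exactly on (\ref{B1}) (so that $M_{1},M_{2}$ become precisely $\mu_{1},\mu_{2}$); with $\lambda=\mu$ alone you get only an $\alpha$-dependent analogue, as you half-acknowledge, and your remark that the $q=1$ Dragomir--Agarwal case must come from Theorem \ref{1.1} rather than from (\ref{2-4}) is correct.
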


\begin{enumerate}
\item In the inequality (\ref{2-4}) if we choose $\lambda =\mu $ and $\alpha
=1$ we obtain the inequality in (\ref{B1}).

\item In the inequality (\ref{2-4}) if we choose $m=\alpha =1$ we have\qquad 
\begin{equation}
\left\vert \frac{\lambda f(a)+\mu f(b)}{\lambda +\mu }-\frac{1}{b-a}%
\dint\limits_{a}^{b}f(x)dx\right\vert \leq ~\frac{b-a}{\left( \lambda +\mu
\right) ^{2}}\left( \frac{1}{p+1}\right) ^{\frac{1}{p}}\left[ \lambda
^{2}M_{1}^{\frac{1}{q}}+\mu ^{2}M_{2}^{\frac{1}{q}}\right]  \label{2-b}
\end{equation}%
where%
\begin{equation*}
M_{1}=\frac{\left\vert f^{\prime }(a)\right\vert ^{q}+\left\vert f^{\prime
}\left( \frac{\lambda b+\mu a}{\lambda +\mu }\right) \right\vert ^{q}}{2}%
,~M_{2}=\frac{\left\vert f^{\prime }(b)\right\vert ^{q}+\left\vert f^{\prime
}\left( \frac{\lambda b+\mu a}{\lambda +\mu }\right) \right\vert ^{q}}{2}
\end{equation*}
\end{enumerate}

\begin{theorem}
\label{2.2}Let $f:I\subset \left[ 0,\infty \right) \mathbb{\rightarrow R}$
be a differentiable mapping on $I^{\circ }$ such that $f^{\prime }\in
L[a,b], $ where $a,b\in I^{\circ }$ with $a<b.$ If $\left\vert f^{\prime
}\right\vert ^{q}$ is $(\alpha ,m)$-convex on $[a,b]$, for some fixed $%
\left( \alpha ,m\right) \in \left( 0,1\right] ^{2},~\lambda ,\mu \in \left[
0,\infty \right) $ with $\lambda +\mu >0,$ and $q>1$, then the following
inequality holds:%
\begin{eqnarray}
&&\left\vert \frac{\lambda f(a)+\mu f(b)}{\lambda +\mu }-\frac{1}{b-a}%
\dint\limits_{a}^{b}f(x)dx\right\vert  \label{2-5} \\
&\leq &\frac{b-a}{\lambda +\mu }\left( \frac{\lambda ^{p+1}+\mu ^{p+1}}{%
(p+1)(\lambda +\mu )}\right) ^{\frac{1}{p}}\left( \frac{1}{\alpha +1}\right)
^{\frac{1}{q}}\min \left\{ K_{1}^{\frac{1}{q}},K_{2}^{\frac{1}{q}}\right\} ,
\notag
\end{eqnarray}%
where%
\begin{eqnarray*}
K_{1} &=&\left\vert f^{\prime }(b)\right\vert ^{q}+m\alpha \left\vert
f^{\prime }(\frac{a}{m})\right\vert ^{q} \\
K_{2} &=&\left\vert f^{\prime }(a)\right\vert ^{q}+m\alpha \left\vert
f^{\prime }(\frac{b}{m})\right\vert ^{q}
\end{eqnarray*}%
and \ $\frac{1}{p}+\frac{1}{q}=1.$
\end{theorem}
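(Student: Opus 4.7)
The plan is to apply H\"{o}lder's inequality to the identity from Lemma \ref{2.1} with a different grouping than in Theorem \ref{1.1}: rather than splitting the linear weight $\left\vert (\lambda +\mu )t-\lambda \right\vert $ between the two H\"{o}lder factors, I would place all of it on the $L^{p}$ side. Taking absolute values in Lemma \ref{2.1} and applying H\"{o}lder with conjugate exponents $p,q$ gives
\begin{equation*}
\left\vert \frac{\lambda f(a)+\mu f(b)}{\lambda +\mu }-\frac{1}{b-a}\int_{a}^{b}f(x)\,dx\right\vert \leq \frac{b-a}{\lambda +\mu }\left( \int_{0}^{1}\left\vert (\lambda +\mu )t-\lambda \right\vert ^{p}dt\right) ^{\frac{1}{p}}\left( \int_{0}^{1}\left\vert f^{\prime }(tb+(1-t)a)\right\vert ^{q}dt\right) ^{\frac{1}{q}}.
\end{equation*}

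First I would evaluate the $p$-integral by splitting the domain at the zero $t=\lambda /(\lambda +\mu )$ of the linear weight. Each of the two resulting pieces is elementary (substitute $u=\lambda -(\lambda +\mu )t$ on the first and $v=(\lambda +\mu )t-\lambda $ on the second), and they combine to $(\lambda ^{p+1}+\mu ^{p+1})/\bigl((p+1)(\lambda +\mu )\bigr)$, which is precisely the factor $\bigl((\lambda ^{p+1}+\mu ^{p+1})/((p+1)(\lambda +\mu ))\bigr)^{1/p}$ announced in the statement.

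Next, to estimate the $q$-integral I would invoke the $(\alpha ,m)$-convexity of $\left\vert f^{\prime }\right\vert ^{q}$ in its two natural forms. Writing $tb+(1-t)a=tb+m(1-t)(a/m)$ and applying the defining inequality yields, after integrating $t^{\alpha }$ and $1-t^{\alpha }$ over $[0,1]$,
\begin{equation*}
\int_{0}^{1}\left\vert f^{\prime }(tb+(1-t)a)\right\vert ^{q}dt\leq \frac{\left\vert f^{\prime }(b)\right\vert ^{q}+m\alpha \left\vert f^{\prime }(a/m)\right\vert ^{q}}{\alpha +1}=\frac{K_{1}}{\alpha +1}.
\end{equation*}
For the companion bound I would substitute $s=1-t$ to rewrite the integral as $\int_{0}^{1}\left\vert f^{\prime }(sa+(1-s)b)\right\vert ^{q}ds$ and then apply $(\alpha ,m)$-convexity to the decomposition $sa+m(1-s)(b/m)$, obtaining $K_{2}/(\alpha +1)$. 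Taking the smaller of the two produces the $\min \{K_{1}^{1/q},K_{2}^{1/q}\}$ and the factor $(\alpha +1)^{-1/q}$ in the conclusion.

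I do not expect any real obstacle; the whole argument is essentially mechanical, and it follows the same general template as Theorem \ref{1.1} but with a cleaner H\"{o}lder split. The one conceptual point worth emphasizing is that the two symmetric decompositions of the convex combination $tb+(1-t)a$, namely as $tb+m(1-t)(a/m)$ and (after $t\mapsto 1-t$) as $sa+m(1-s)(b/m)$, are precisely what produce the two candidates $K_{1}$ and $K_{2}$ over which the minimum is taken.
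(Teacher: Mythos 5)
Your proposal is correct and follows exactly the paper's own argument: apply H\"{o}lder to the identity of Lemma \ref{2.1} with the full weight $\left\vert (\lambda +\mu )t-\lambda \right\vert$ in the $L^{p}$ factor, evaluate that integral as $(\lambda ^{p+1}+\mu ^{p+1})/((p+1)(\lambda +\mu ))$, and bound the remaining $q$-integral by the two symmetric $(\alpha ,m)$-convexity decompositions to get $K_{1}/(\alpha +1)$ and $K_{2}/(\alpha +1)$. No substantive difference from the paper's proof.
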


\begin{proof}
From Lemma \ref{2.1} and using the H\"{o}lder's integral inequality, we have%
\begin{eqnarray*}
&&\left\vert \frac{\lambda f(a)+\mu f(b)}{\lambda +\mu }-\frac{1}{b-a}%
\dint\limits_{a}^{b}f(x)dx\right\vert \\
&\leq &\dint\limits_{0}^{1}\left\vert (\lambda +\mu )t-\lambda \right\vert
\left\vert f^{\prime }(tb+(1-t)a)\right\vert dt \\
&\leq &\frac{b-a}{\lambda +\mu }\left( \dint\limits_{0}^{1}\left\vert
(\lambda +\mu )t-\lambda \right\vert ^{p}dt\right) ^{\frac{1}{p}}\left(
\dint\limits_{0}^{1}\left\vert f^{\prime }(tb+(1-t)a)\right\vert
^{q}dt\right) ^{\frac{1}{q}} \\
&\leq &\frac{b-a}{\lambda +\mu }\left( \frac{\lambda ^{p+1}+\mu ^{p+1}}{%
(p+1)(\lambda +\mu )}\right) ^{\frac{1}{p}}\left(
\dint\limits_{0}^{1}t^{\alpha }\left\vert f^{\prime }(b)\right\vert
^{q}+m(1-t^{\alpha })\left\vert f^{\prime }(\frac{a}{m})\right\vert
^{q}dt\right) ^{\frac{1}{q}} \\
&=&\frac{b-a}{\lambda +\mu }\left( \frac{\lambda ^{p+1}+\mu ^{p+1}}{%
(p+1)(\lambda +\mu )}\right) ^{\frac{1}{p}}\left( \frac{1}{\alpha +1}\right)
^{\frac{1}{q}}\left( \left\vert f^{\prime }(b)\right\vert ^{q}+m\alpha
\left\vert f^{\prime }(\frac{a}{m})\right\vert ^{q}\right) ^{\frac{1}{q}}.
\end{eqnarray*}%
Analogously we obtain%
\begin{eqnarray*}
&&\left\vert \frac{\lambda f(a)+\mu f(b)}{\lambda +\mu }-\frac{1}{b-a}%
\dint\limits_{a}^{b}f(x)dx\right\vert \\
&\leq &\frac{b-a}{\lambda +\mu }\left( \frac{\lambda ^{p+1}+\mu ^{p+1}}{%
(p+1)(\lambda +\mu )}\right) ^{\frac{1}{p}}\left( \frac{1}{\alpha +1}\right)
^{\frac{1}{q}}\left( \left\vert f^{\prime }(a)\right\vert ^{q}+m\alpha
\left\vert f^{\prime }(\frac{b}{m})\right\vert ^{q}\right) ^{\frac{1}{q}}.
\end{eqnarray*}
\end{proof}

\begin{corollary}
\label{2.c}Suppose that all the assumptions of Theorem\ref{2.2} are
satisfied, in this case:
\end{corollary}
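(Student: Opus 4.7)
The plan is to obtain the content of Corollary \ref{2.c} as a routine specialization of the master inequality (\ref{2-5}) from Theorem \ref{2.2}, paralleling what Corollaries \ref{2.a} and \ref{2.b} did for Theorems \ref{1.1} and \ref{2.1.1}. No new analytic estimates are needed: every asserted bound should follow by substituting particular values of $\lambda,\mu,\alpha,m,q$ into (\ref{2-5}) and simplifying the constants, so the work is entirely algebraic bookkeeping.

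First, I would handle the symmetric case $\lambda=\mu$. The substitution collapses the factor $\dfrac{\lambda^{p+1}+\mu^{p+1}}{(p+1)(\lambda+\mu)}$ to $\dfrac{\lambda^{p}}{p+1}$, and combining with the outer prefactor $\dfrac{b-a}{\lambda+\mu}=\dfrac{b-a}{2\lambda}$ makes every $\lambda$ cancel, leaving a clean bound of the form $\dfrac{b-a}{2(p+1)^{1/p}}\left(\dfrac{1}{\alpha+1}\right)^{1/q}\min\{K_1^{1/q},K_2^{1/q}\}$. This is the standard ``H\"older-type'' companion of the trapezoid inequality for $(\alpha,m)$-convex derivatives, and specialization to $\alpha=1$ (and subsequently $m=1$) should recover the corresponding one-parameter versions in the spirit of (\ref{B1}).

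Second, I would set $m=\alpha=1$ in (\ref{2-5}). Under this choice $(\alpha,m)$-convexity reduces to ordinary convexity, the constant $\left(\dfrac{1}{\alpha+1}\right)^{1/q}$ becomes $2^{-1/q}$, and $K_1=|f'(b)|^q+|f'(a)|^q=K_2$, so the minimum is redundant. The remaining bound is a two-parameter $(\lambda,\mu)$-weighted H\"older-type analogue of the Dragomir--Agarwal estimate (\ref{1-2}). A further specialization $\lambda=\mu$ (together with $q\to 1$, recovered as a limiting case from the conjugacy relation) should then reproduce (\ref{1-2}) itself.

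The only genuine obstacle is cosmetic: one must track carefully how the factor $(p+1)^{1/p}$, the prefactor $\dfrac{1}{\lambda+\mu}$, and the term $\bigl(\lambda^{p+1}+\mu^{p+1}\bigr)^{1/p}$ interact under each specialization, and use the identity $\tfrac{1}{p}+\tfrac{1}{q}=1$ consistently to match the constants appearing in the target inequalities. No deeper argument beyond Theorem \ref{2.2} is needed.
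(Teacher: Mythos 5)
Your proposal is correct and matches the paper's (implicit) argument: both items of Corollary \ref{2.c} are obtained by direct substitution of $\lambda=\mu$, respectively $m=\alpha=1$, into (\ref{2-5}), with exactly the algebraic simplifications you describe (the factor collapsing to $\lambda^{p}/(p+1)$, the constant $(1/(\alpha+1))^{1/q}$ becoming $2^{-1/q}$, and $K_{1}=K_{2}$ making the minimum redundant). One aside in your write-up is inaccurate and should be dropped: taking $\lambda=\mu$, $m=\alpha=1$ and letting $q\to 1$ (so $p\to\infty$) in (\ref{2-5}) yields the constant $\frac{b-a}{4}\left(\left\vert f^{\prime }(a)\right\vert +\left\vert f^{\prime }(b)\right\vert\right)/2$-type bound with prefactor $\frac{b-a}{4}$, not the $\frac{b-a}{8}$ of (\ref{1-2}); however, this claim is not part of the corollary's statement and does not affect the validity of your derivation of its two items.
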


\begin{enumerate}
\item In the inequality (\ref{2-5}) if we choose $\lambda =\mu $, then the
following inequality holds:%
\begin{eqnarray*}
&&\left\vert \frac{f(a)+f(b)}{2}-\frac{1}{b-a}\dint\limits_{a}^{b}f(x)dx%
\right\vert \\
&\leq &\frac{b-a}{2}\left( \frac{1}{p+1}\right) ^{\frac{1}{p}}\left( \frac{1%
}{\alpha +1}\right) ^{\frac{1}{q}}\min \left\{ K_{1}^{\frac{1}{q}},K_{2}^{%
\frac{1}{q}}\right\} ,
\end{eqnarray*}

\item In the inequality (\ref{2-5}) if we choose $m=\alpha =1$, we have%
\begin{eqnarray}
&&\left\vert \frac{\lambda f(a)+\mu f(b)}{\lambda +\mu }-\frac{1}{b-a}%
\dint\limits_{a}^{b}f(x)dx\right\vert  \label{2-c} \\
&\leq &\frac{b-a}{\lambda +\mu }\left( \frac{\lambda ^{p+1}+\mu ^{p+1}}{%
\lambda +\mu }\right) ^{\frac{1}{p}}\left( \frac{1}{p+1}\right) ^{\frac{1}{p}%
}\left( \frac{1}{2}\right) ^{\frac{1}{q}}\left( \left\vert f^{\prime
}(a)\right\vert ^{q}+\left\vert f^{\prime }(b)\right\vert ^{q}\right) ^{%
\frac{1}{q}}  \notag
\end{eqnarray}
\end{enumerate}

\section{Some applications for special means}

Let us recall the following special means of two nonnegative number $a,b$
with $b\geq a$ and $\alpha \in \left[ 0,1\right] :$

\begin{enumerate}
\item The weighted arithmetic mean%
\begin{equation*}
A_{\alpha }\left( a,b\right) :=\alpha a+(1-\alpha )b,~a,b\geq 0.
\end{equation*}

\item The unweighted arithmetic mean%
\begin{equation*}
A\left( a,b\right) :=\frac{a+b}{2},~a,b\geq 0.
\end{equation*}

\item The weighted harmonic mean%
\begin{equation*}
H_{\alpha }\left( a,b\right) :=\left( \frac{\alpha }{a}+\frac{1-\alpha }{b}%
\right) ^{-1},\ \ a,b>0.
\end{equation*}

\item The unweighted harmonic mean%
\begin{equation*}
H\left( a,b\right) :=\frac{2ab}{a+b},\ \ a,b>0.
\end{equation*}

\item The Logarithmic mean%
\begin{equation*}
L\left( a,b\right) :=\left\{ 
\begin{array}{cc}
\frac{b-a}{\ln b-\ln a} & if\ a\neq b \\ 
b & if\ a=b%
\end{array}%
\right. ,\ \ a,b>0.
\end{equation*}

\item The p-Logarithmic mean%
\begin{equation*}
L_{p}\left( a,b\right) :=\left\{ 
\begin{array}{cc}
\left( \frac{b^{p+1}-a^{p+1}}{(p+1)(b-a)}\right) ^{\frac{1}{p}} & if\ a\neq b
\\ 
b & if\ a=b%
\end{array}%
\right. ,\ \ a,b>0,\ p\in 
%TCIMACRO{\U{2124} }%
%BeginExpansion
\mathbb{Z}
%EndExpansion
\backslash \left\{ -1,0\right\} .
\end{equation*}
\end{enumerate}

\begin{proposition}
Let $a,b\in 
%TCIMACRO{\U{211d} }%
%BeginExpansion
\mathbb{R}
%EndExpansion
$ with $0<a<b$ and $n\in 
%TCIMACRO{\U{2124} }%
%BeginExpansion
\mathbb{Z}
%EndExpansion
,\ \left\vert n\right\vert \geq 2.$ Then, we have the following inequality:%
\begin{eqnarray*}
\left\vert A_{\frac{\lambda }{\lambda +\mu }}\left( a^{n},b^{n}\right)
-L_{n}^{n}\left( a,b\right) \right\vert  &\leq &\frac{b-a}{\lambda +\mu }%
\left( \frac{\lambda ^{2}+\mu ^{2}}{2\left( \lambda +\mu \right) }\right) ^{%
\frac{q-1}{q}} \\
&&\times \left\vert n\right\vert \min \left\{ \left( \gamma
_{1}b^{q(n-1)}+\gamma _{2}a^{^{q(n-1)}}\right) ^{\frac{1}{q}},~\left( \gamma
_{3}a^{^{q(n-1)}}+\gamma _{4}b^{^{q(n-1)}}\right) ^{\frac{1}{q}}\right\} 
\end{eqnarray*}%
where%
\begin{equation*}
\gamma _{1}=\frac{1}{6}\left[ \frac{2\lambda ^{3}}{\left( \lambda +\mu
\right) ^{2}}+2\mu -\lambda \right] ,~\gamma _{2}=\frac{\lambda ^{2}+\mu ^{2}%
}{2\left( \lambda +\mu \right) }-\gamma _{1},
\end{equation*}%
\begin{equation*}
\gamma _{3}=\frac{1}{6}\left[ \frac{2\mu ^{3}}{\left( \lambda +\mu \right)
^{2}}+2\lambda -\mu \right] ,\gamma _{4}=\frac{\lambda ^{2}+\mu ^{2}}{%
2\left( \lambda +\mu \right) }-\gamma _{3},
\end{equation*}%
$\lambda ,\mu \in \left[ 0,\infty \right) $ with $\lambda +\mu >0$ and $%
q\geq 1.$
\end{proposition}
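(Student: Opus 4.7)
The plan is to recognize this proposition as a direct application of inequality (\ref{2-a}) from Corollary \ref{2.a} to the power function $f(x)=x^n$ on $[a,b]\subset(0,\infty)$. First I would verify the two algebraic identifications: (i) $\frac{\lambda f(a)+\mu f(b)}{\lambda+\mu}=\frac{\lambda a^n+\mu b^n}{\lambda+\mu}=A_{\frac{\lambda}{\lambda+\mu}}(a^n,b^n)$ (matching the definition of the weighted arithmetic mean in the paper, after reading the convention for the weight), and (ii) $\frac{1}{b-a}\int_a^b x^n\,dx=\frac{b^{n+1}-a^{n+1}}{(n+1)(b-a)}=L_n^n(a,b)$, which holds for $n\in\mathbb{Z}\setminus\{-1,0\}$ and in particular for $|n|\ge 2$.

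Next I would check the hypotheses of Corollary \ref{2.a}(3). We have $f'(x)=nx^{n-1}$, so for $x\in[a,b]\subset(0,\infty)$,
\[
|f'(x)|^q = |n|^q\,x^{q(n-1)}.
\]
When $n\ge 2$ the exponent $q(n-1)\ge q\ge 1$, and when $n\le -2$ the exponent $q(n-1)\le -q<0$; in both cases $x\mapsto x^{q(n-1)}$ is convex on $(0,\infty)$, so $|f'|^q$ is convex on $[a,b]$, i.e.\ $(\alpha,m)$-convex with $\alpha=m=1$. Thus Corollary \ref{2.a}(3) applies.

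Finally I would substitute $|f'(a)|^q=|n|^q a^{q(n-1)}$ and $|f'(b)|^q=|n|^q b^{q(n-1)}$ into (\ref{2-a}) and factor $|n|$ out of the $q$-th root:
\[
\bigl(\gamma_1|f'(b)|^q+\gamma_2|f'(a)|^q\bigr)^{1/q}
=|n|\bigl(\gamma_1 b^{q(n-1)}+\gamma_2 a^{q(n-1)}\bigr)^{1/q},
\]
and similarly for the second branch of the minimum. Collecting terms yields exactly the stated inequality, with the $\gamma_i$ as defined. There is no real obstacle here beyond the convexity check for negative exponents $n\le -2$, which is where the restriction $|n|\ge 2$ is genuinely used (together with $n\ne -1$ to guarantee that the integral representation of $L_n^n$ is valid).
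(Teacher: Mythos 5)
Your proposal is correct and follows exactly the paper's route: the paper's entire proof is the one-line remark that the assertion follows from inequality (\ref{2-a}) of Corollary \ref{2.a} applied to $f(x)=x^{n}$, and you simply carry out the substitutions and the convexity check for $|f'|^{q}=|n|^{q}x^{q(n-1)}$ (including the case $n\leq -2$) that the paper leaves implicit. No discrepancy to report.
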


\begin{proof}
The assertion follows from inequality (\ref{2-a}) in Corollary \ref{2.a},
for $f:\left( 0,\infty \right) \mathbb{\rightarrow R},\ f(x)=x^{n}$
\end{proof}

\begin{proposition}
Let $a,b\in 
%TCIMACRO{\U{211d} }%
%BeginExpansion
\mathbb{R}
%EndExpansion
$ with $0<a<b$ and $n\in 
%TCIMACRO{\U{2124} }%
%BeginExpansion
\mathbb{Z}
%EndExpansion
,\ \left\vert n\right\vert \geq 2.$ Then, we have the following inequality:%
\begin{equation*}
\left\vert A_{\frac{\lambda }{\lambda +\mu }}\left( a^{n},b^{n}\right)
-L_{n}^{n}\left( a,b\right) \right\vert \leq \frac{b-a}{\left( \lambda +\mu
\right) ^{2}}\left( \frac{1}{p+1}\right) ^{\frac{1}{p}}\left\vert
n\right\vert \left[ \lambda ^{2}M_{1}^{\frac{1}{q}}+\mu ^{2}M_{2}^{\frac{1}{q%
}}\right] 
\end{equation*}%
where%
\begin{equation*}
M_{1}=A\left( a^{(n-1)q},A_{\frac{\lambda }{\lambda +\mu }}^{(n-1)q}\left(
b,a\right) \right) 
\end{equation*}%
\begin{equation*}
M_{2}=A\left( b^{(n-1)q},A_{\frac{\lambda }{\lambda +\mu }}^{(n-1)q}\left(
b,a\right) \right) ,
\end{equation*}%
$\ \lambda ,\mu \in \left[ 0,\infty \right) $ with $\lambda +\mu >0,$ and $%
q>1$ with $\frac{1}{p}+\frac{1}{q}=1.$
\end{proposition}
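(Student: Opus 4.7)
The plan is to apply inequality (\ref{2-b}) of Corollary \ref{2.b}, which is the specialization of Theorem \ref{2.1.1} to $m=\alpha=1$, to the test function $f(x)=x^{n}$ on an open interval containing $[a,b]\subset(0,\infty)$. This choice parallels the previous proposition, which instead used the $m=\alpha=1$ form (\ref{2-a}) of Theorem \ref{1.1}; here the shape of the bound (the presence of the factor $(1/(p+1))^{1/p}$ and the square-weights $\lambda^{2},\mu^{2}$) points unambiguously to Theorem \ref{2.1.1}.

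The first step is to verify the hypotheses of the corollary for $f(x)=x^{n}$. Differentiability on $(0,\infty)$ is immediate, and $|f'(x)|^{q}=|n|^{q}x^{(n-1)q}$. Since $|n|\geq 2$, the exponent $(n-1)q$ is either $\geq q\geq 1$ (for $n\geq 2$) or strictly negative (for $n\leq-2$); in both regimes $x\mapsto x^{(n-1)q}$ is convex on $(0,\infty)$, so $|f'|^{q}$ is convex, which is precisely $(\alpha,m)$-convexity with $m=\alpha=1$.

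The second step is to translate both sides via the special-mean notation. On the left, $\dfrac{\lambda f(a)+\mu f(b)}{\lambda+\mu}=\dfrac{\lambda a^{n}+\mu b^{n}}{\lambda+\mu}=A_{\lambda/(\lambda+\mu)}(a^{n},b^{n})$, while $\dfrac{1}{b-a}\int_{a}^{b}x^{n}\,dx=\dfrac{b^{n+1}-a^{n+1}}{(n+1)(b-a)}=L_{n}^{n}(a,b)$. On the right, note that the interpolation point appearing in Corollary \ref{2.b} satisfies $\dfrac{\lambda b+\mu a}{\lambda+\mu}=A_{\lambda/(\lambda+\mu)}(b,a)$, so
\begin{equation*}
\left|f'\!\left(\tfrac{\lambda b+\mu a}{\lambda+\mu}\right)\right|^{q}
=|n|^{q}\,A_{\lambda/(\lambda+\mu)}^{(n-1)q}(b,a),
\end{equation*}
while $|f'(a)|^{q}=|n|^{q}a^{(n-1)q}$ and $|f'(b)|^{q}=|n|^{q}b^{(n-1)q}$. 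Substituting these into the expressions for $M_{1},M_{2}$ in (\ref{2-b}) yields
\begin{equation*}
M_{1}=|n|^{q}\,A\!\bigl(a^{(n-1)q},A_{\lambda/(\lambda+\mu)}^{(n-1)q}(b,a)\bigr),
\qquad
M_{2}=|n|^{q}\,A\!\bigl(b^{(n-1)q},A_{\lambda/(\lambda+\mu)}^{(n-1)q}(b,a)\bigr),
\end{equation*}
so that taking $1/q$-powers pulls the common factor $|n|$ outside the bracket $[\lambda^{2}M_{1}^{1/q}+\mu^{2}M_{2}^{1/q}]$, matching the stated inequality once we rename $M_{1},M_{2}$ to absorb the $|n|^{q}$.

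There is no real obstacle: the argument is a substitution into (\ref{2-b}) together with bookkeeping in the special-means notation. The only point requiring care is the convexity check for $|f'|^{q}$ when $n$ is a negative integer with $|n|\geq 2$, which is handled by observing that $(n-1)q<0$ in that range and that negative powers are convex on $(0,\infty)$.
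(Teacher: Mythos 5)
Your proposal is correct and follows exactly the route the paper takes: the paper's own proof is the one-line remark that the assertion follows from inequality (\ref{2-b}) of Corollary \ref{2.b} applied to $f(x)=x^{n}$ on $(0,\infty)$, and you have simply supplied the (accurate) verification of convexity of $|f'|^{q}$ and the bookkeeping that identifies $\frac{\lambda b+\mu a}{\lambda+\mu}$ with $A_{\lambda/(\lambda+\mu)}(b,a)$ and factors $|n|$ out of the bracket. No gaps.
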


\begin{proof}
The assertion follows from inequality (\ref{2-b}) in Corollary \ref{2.b},
for $f:\left( 0,\infty \right) \mathbb{\rightarrow R},\ f(x)=x^{n}$
\end{proof}

\begin{proposition}
Let $a,b\in 
%TCIMACRO{\U{211d} }%
%BeginExpansion
\mathbb{R}
%EndExpansion
$ with $0<a<b$ and $n\in 
%TCIMACRO{\U{2124} }%
%BeginExpansion
\mathbb{Z}
%EndExpansion
,\ \left\vert n\right\vert \geq 2.$ Then, we have the following inequality:%
\begin{eqnarray*}
&&\left\vert A_{\frac{\lambda }{\lambda +\mu }}\left( a^{n},b^{n}\right)
-L_{n}^{n}\left( a,b\right) \right\vert  \\
&\leq &\frac{b-a}{\lambda +\mu }\left( \frac{\lambda ^{p+1}+\mu ^{p+1}}{%
\lambda +\mu }\right) ^{\frac{1}{p}}\left( \frac{1}{p+1}\right) ^{\frac{1}{p}%
}\left\vert n\right\vert A^{\frac{1}{q}}\left( a^{(n-1)q},b^{(n-1)q}\right) 
\end{eqnarray*}%
$\lambda ,\mu \in \left[ 0,\infty \right) $ with $\lambda +\mu >0,$ and $q>1$
with $\frac{1}{p}+\frac{1}{q}=1.$
\end{proposition}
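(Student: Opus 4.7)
The plan is to specialize Corollary~\ref{2.c} (specifically inequality~(\ref{2-c}), which is the $m=\alpha =1$ case of Theorem~\ref{2.2}) to the power function $f(x)=x^{n}$ on the interval $[a,b]\subset (0,\infty )$. Before invoking the corollary, I must verify that $\left\vert f^{\prime }\right\vert ^{q}$ is convex on $[a,b]$. Since $f^{\prime }(x)=nx^{n-1}$, one has $\left\vert f^{\prime }(x)\right\vert ^{q}=|n|^{q}x^{(n-1)q}$ on $(0,\infty )$, which is a single power function whose exponent $(n-1)q$ is at least $q\geq 1$ when $n\geq 2$ and is strictly negative when $n\leq -2$; in either case $x\mapsto x^{(n-1)q}$ is convex on $(0,\infty )$, so the convexity hypothesis of Corollary~\ref{2.c} is satisfied.

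Next I would translate each piece of (\ref{2-c}) into the notation of special means. The convex combination on the left becomes $\frac{\lambda a^{n}+\mu b^{n}}{\lambda +\mu }=A_{\lambda /(\lambda +\mu )}(a^{n},b^{n})$, and the integral mean becomes $\frac{1}{b-a}\int_{a}^{b}x^{n}\,dx=\frac{b^{n+1}-a^{n+1}}{(n+1)(b-a)}=L_{n}^{n}(a,b)$, where the assumption $\left\vert n\right\vert \geq 2$ ensures $n+1\neq 0$ so that this closed-form antiderivative (rather than the logarithm) is used. Hence the left-hand side of (\ref{2-c}) matches exactly $\left\vert A_{\lambda /(\lambda +\mu )}(a^{n},b^{n})-L_{n}^{n}(a,b)\right\vert $.

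For the right-hand side I would compute $\left\vert f^{\prime }(a)\right\vert ^{q}+\left\vert f^{\prime }(b)\right\vert ^{q}=|n|^{q}\bigl(a^{(n-1)q}+b^{(n-1)q}\bigr)$. Pulling $|n|^{q}$ out of the $1/q$-th power produces the factor $\left\vert n\right\vert $, and combining the leftover $(1/2)^{1/q}$ with the remaining bracket converts it into $A^{1/q}\bigl(a^{(n-1)q},b^{(n-1)q}\bigr)$. Substituting these identifications back into (\ref{2-c}) yields the claimed inequality directly.

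I do not anticipate any serious obstacle: the proposition is essentially a direct specialization of Corollary~\ref{2.c}. The two small bookkeeping points to watch are (i) the absolute value when $n$ is negative, handled automatically by writing $\left\vert f^{\prime }(x)\right\vert ^{q}=|n|^{q}x^{(n-1)q}$ for $x>0$, and (ii) excluding $n=-1,0,1$ — which is why the hypothesis $\left\vert n\right\vert \geq 2$ appears, simultaneously guaranteeing both the convexity of $\left\vert f^{\prime }\right\vert ^{q}$ and the validity of the $L_{n}^{n}$ representation.
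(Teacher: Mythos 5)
Your proof is correct and takes essentially the same route as the paper, which also obtains the proposition by specializing inequality (\ref{2-c}) of Corollary \ref{2.c} to a power function on $(0,\infty)$; your identification of the two sides with $A_{\lambda /(\lambda +\mu )}(a^{n},b^{n})$, $L_{n}^{n}(a,b)$ and $\left\vert n\right\vert A^{1/q}\bigl(a^{(n-1)q},b^{(n-1)q}\bigr)$, and your convexity check for $\left\vert f^{\prime }\right\vert ^{q}$ covering both $n\geq 2$ and $n\leq -2$, are all sound. The only discrepancy is that the paper's one-line proof writes $f(x)=\frac{1}{x}$, which is evidently a typographical slip carried over from the harmonic-mean propositions; the intended (and correct) choice is $f(x)=x^{n}$, exactly as you use.
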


\begin{proof}
The assertion follows from inequality (\ref{2-c}) in Corollary \ref{2.c},
for $f:\left( 0,\infty \right) \mathbb{\rightarrow R},\ f(x)=\frac{1}{x}$
\end{proof}

\begin{proposition}
Let $a,b\in 
%TCIMACRO{\U{211d} }%
%BeginExpansion
\mathbb{R}
%EndExpansion
$ with $0<a<b$ $.$Then we have the following inequality:%
\begin{eqnarray*}
&&\left\vert H_{\frac{\lambda }{\lambda +\mu }}^{-1}\left( a,b\right)
-L^{-1}\left( a,b\right) \right\vert  \\
&\leq &\frac{b-a}{\lambda +\mu }\left( \frac{\lambda ^{2}+\mu ^{2}}{2\left(
\lambda +\mu \right) }\right) ^{\frac{q-1}{q}} \\
&&\times \min \left\{ \left( \gamma _{1}\frac{1}{b^{2q}}+\gamma _{2}\frac{1}{%
a^{2q}}\right) ^{\frac{1}{q}},~\left( \gamma _{3}\frac{1}{a^{2q}}+\gamma _{4}%
\frac{1}{b^{2q}}\right) ^{\frac{1}{q}}\right\} 
\end{eqnarray*}%
where%
\begin{equation*}
\gamma _{1}=\frac{1}{6}\left[ \frac{2\lambda ^{3}}{\left( \lambda +\mu
\right) ^{2}}+2\mu -\lambda \right] ,~\gamma _{2}=\frac{\lambda ^{2}+\mu ^{2}%
}{2\left( \lambda +\mu \right) }-\gamma _{1},
\end{equation*}%
\begin{equation*}
\gamma _{3}=\frac{1}{6}\left[ \frac{2\mu ^{3}}{\left( \lambda +\mu \right)
^{2}}+2\lambda -\mu \right] ,\gamma _{4}=\frac{\lambda ^{2}+\mu ^{2}}{%
2\left( \lambda +\mu \right) }-\gamma _{3},
\end{equation*}%
$\lambda ,\mu \in \left[ 0,\infty \right) $ with $\lambda +\mu >0$ and $%
q\geq 1.$
\end{proposition}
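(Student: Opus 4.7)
The plan is to apply inequality (\ref{2-a}) of Corollary \ref{2.a}(3) to the function $f:(0,\infty)\to\mathbb{R}$ given by $f(x)=1/x$ on the interval $[a,b]\subset(0,\infty)$, in exactly the same spirit as the first three propositions of this section. The preceding propositions chose $f(x)=x^{n}$; swapping in $f(x)=1/x$ and keeping the specialization $m=\alpha=1$ used in the stated Corollary part turns the abstract inequality into a statement about the harmonic and logarithmic means.

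First I would verify the hypotheses. The function $f(x)=1/x$ is differentiable on $(0,\infty)$ with $f'(x)=-1/x^{2}$, so $|f'(x)|^{q}=1/x^{2q}$. Since $q\geq 1$ and $x>0$, this is convex in $x$ (its second derivative is $2q(2q+1)/x^{2q+2}>0$), which is exactly the required $(\alpha,m)$-convexity of $|f'|^{q}$ in the case $\alpha=m=1$ that underlies (\ref{2-a}). Note that the condition concerns $|f'|^{q}$ rather than $f$, so this is the only convexity check needed.

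Next I would translate both sides of (\ref{2-a}) into the language of special means. On the left, the definition of the weighted harmonic mean gives
\[
\frac{\lambda f(a)+\mu f(b)}{\lambda+\mu}=\frac{\lambda}{(\lambda+\mu)a}+\frac{\mu}{(\lambda+\mu)b}=H_{\lambda/(\lambda+\mu)}^{-1}(a,b),
\]
while
\[
\frac{1}{b-a}\int_{a}^{b}\frac{dx}{x}=\frac{\ln b-\ln a}{b-a}=L^{-1}(a,b).
\]
On the right I would substitute $|f'(a)|^{q}=1/a^{2q}$ and $|f'(b)|^{q}=1/b^{2q}$; the constants $\gamma_{1},\gamma_{2},\gamma_{3},\gamma_{4}$ in (\ref{2-a}) depend only on $\lambda,\mu$ and so are carried over unchanged.

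There is essentially no obstacle: once $f(x)=1/x$ is chosen, the result is a direct bookkeeping substitution into (\ref{2-a}). The only subtle point worth stating is the hypothesis check on convexity of $|f'|^{q}=x^{-2q}$, which is immediate. I would record the proof as one sentence, citing Corollary \ref{2.a} and the identification of the two means above, mirroring the style of the three propositions preceding it.
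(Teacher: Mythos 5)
Your proof is correct and follows essentially the same route as the paper: both apply inequality (\ref{2-a}) of Corollary \ref{2.a} to a suitable $f$ on $(0,\infty)$ and identify the resulting quantities with the weighted harmonic and logarithmic means. In fact your choice $f(x)=1/x$ is the right one (giving $|f'(x)|^{q}=x^{-2q}$, hence the factors $a^{-2q}$, $b^{-2q}$), whereas the paper's one-line proof writes $f(x)=x^{n}$, evidently a copy-paste slip from the earlier propositions.
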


\begin{proof}
The assertion follows from inequality (\ref{2-a}) in Corollary \ref{2.a},
for $f:\left( 0,\infty \right) \mathbb{\rightarrow R},\ f(x)=x^{n}$
\end{proof}

\begin{proposition}
Let $a,b\in 
%TCIMACRO{\U{211d} }%
%BeginExpansion
\mathbb{R}
%EndExpansion
$ with $0<a<b.$Then we have the following inequality:%
\begin{equation*}
\left\vert H_{\frac{\lambda }{\lambda +\mu }}^{-1}\left( a,b\right)
-L^{-1}\left( a,b\right) \right\vert \leq \frac{b-a}{\left( \lambda +\mu
\right) ^{2}}\left( \frac{1}{p+1}\right) ^{\frac{1}{p}}\left[ \lambda
^{2}M_{1}^{\frac{1}{q}}+\mu ^{2}M_{2}^{\frac{1}{q}}\right] 
\end{equation*}%
where%
\begin{equation*}
M_{1}=H^{-1}\left( a^{2q},A_{\frac{\lambda }{\lambda +\mu }}^{2q}\left(
b,a\right) \right) ,~M_{2}=H^{-1}\left( b^{2q},A_{\frac{\lambda }{\lambda
+\mu }}^{2q}\left( b,a\right) \right) 
\end{equation*}%
$\ \lambda ,\mu \in \left[ 0,\infty \right) $ with $\lambda +\mu >0,$ and $%
q>1$ with $\frac{1}{p}+\frac{1}{q}=1.$
\end{proposition}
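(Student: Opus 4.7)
The plan is to apply inequality (\ref{2-b}) from Corollary \ref{2.b} directly to the test function $f:(0,\infty)\to\mathbb{R}$, $f(x)=1/x$, restricted to $[a,b]\subset(0,\infty)$. Since $f'(x)=-1/x^{2}$, we have $|f'(x)|^{q}=x^{-2q}$, whose second derivative $2q(2q+1)x^{-2q-2}$ is positive on $(0,\infty)$. Hence $|f'|^{q}$ is convex there, which is exactly the hypothesis that Corollary \ref{2.b} requires in the specialization $m=\alpha=1$.

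Next I would translate the left-hand side of (\ref{2-b}) into the special-means notation of Section~3. The weighted combination $\frac{\lambda f(a)+\mu f(b)}{\lambda+\mu}=\frac{1}{\lambda+\mu}\bigl(\frac{\lambda}{a}+\frac{\mu}{b}\bigr)$ is, by the definition of the weighted harmonic mean with weight $\alpha=\lambda/(\lambda+\mu)$, precisely $H_{\lambda/(\lambda+\mu)}^{-1}(a,b)$. Meanwhile $\frac{1}{b-a}\int_{a}^{b}\frac{dx}{x}=\frac{\ln b-\ln a}{b-a}=L^{-1}(a,b)$. Together these give the left-hand side of the proposition.

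For the right-hand side, the key observation is that the interior point $\frac{\lambda b+\mu a}{\lambda+\mu}$ equals $A_{\lambda/(\lambda+\mu)}(b,a)$. Then $|f'(a)|^{q}=a^{-2q}$ and $|f'(A_{\lambda/(\lambda+\mu)}(b,a))|^{q}=A_{\lambda/(\lambda+\mu)}^{-2q}(b,a)$, so the $M_{1}$ of Corollary \ref{2.b} becomes $\tfrac{1}{2}\bigl(a^{-2q}+A_{\lambda/(\lambda+\mu)}^{-2q}(b,a)\bigr)$. Using the elementary identity $H^{-1}(x,y)=\tfrac{1}{2}(x^{-1}+y^{-1})$ with $x=a^{2q}$ and $y=A_{\lambda/(\lambda+\mu)}^{2q}(b,a)$, this is exactly $H^{-1}\bigl(a^{2q},A_{\lambda/(\lambda+\mu)}^{2q}(b,a)\bigr)$; the analogous manipulation with $b$ in place of $a$ gives $M_{2}$. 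Substituting these identifications into (\ref{2-b}) yields the claimed bound.

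No step is genuinely hard here: the only place deserving a little care is recognizing the average of reciprocals appearing in $M_{1},M_{2}$ as the reciprocal of a harmonic mean, which rests on the identity $H^{-1}(x,y)=\tfrac{1}{2}(x^{-1}+y^{-1})$. Beyond that, the proof is routine bookkeeping, exactly parallel to the preceding propositions but with $f(x)=1/x$ in place of $f(x)=x^{n}$.
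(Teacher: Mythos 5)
Your proposal is correct and follows exactly the route the paper takes: the paper's proof is a one-line citation of inequality (\ref{2-b}) from Corollary \ref{2.b} applied to $f(x)=1/x$, and you carry out the same specialization, additionally verifying the convexity of $|f'|^{q}=x^{-2q}$ and the identifications of the weighted harmonic mean, the logarithmic mean, and the terms $M_{1},M_{2}$ via $H^{-1}(x,y)=\tfrac{1}{2}\left(x^{-1}+y^{-1}\right)$. All of these identifications check out, so your argument is a fully detailed version of the paper's own.
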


\begin{proof}
The assertion follows from inequality (\ref{2-b}) in Corollary \ref{2.b},
for $f:\left( 0,\infty \right) \mathbb{\rightarrow R},\ f(x)=\frac{1}{x}$
\end{proof}

\begin{proposition}
Let $a,b\in 
%TCIMACRO{\U{211d} }%
%BeginExpansion
\mathbb{R}
%EndExpansion
$ with $0<a<b$ . Then we have the following inequality:%
\begin{eqnarray*}
&&\left\vert H_{\frac{\lambda }{\lambda +\mu }}^{-1}\left( a,b\right)
-L^{-1}\left( a,b\right) \right\vert  \\
&\leq &\frac{b-a}{\lambda +\mu }A_{\frac{\lambda }{\lambda +\mu }}^{\frac{1}{%
p}}\left( \lambda ^{p},\mu ^{p}\right) \left( \frac{1}{p+1}\right) ^{\frac{1%
}{p}}\left( \frac{1}{2}\right) ^{\frac{1}{q}}H_{\frac{1}{2}}^{\frac{-1}{q}%
}\left( a^{2q},b^{2q}\right) 
\end{eqnarray*}%
$\lambda ,\mu \in \left[ 0,\infty \right) $ with $\lambda +\mu >0,$ and $q>1$
with $\frac{1}{p}+\frac{1}{q}=1.$
\end{proposition}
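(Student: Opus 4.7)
The proposition matches the pattern of the other propositions in this section: the assertion should follow by applying inequality (\ref{2-c}) of Corollary \ref{2.c} to $f:(0,\infty)\to\mathbb{R}$ with $f(x)=1/x$. First I would check the hypotheses of that corollary. The function $f$ is differentiable on $(0,\infty)$ with $f'(x)=-1/x^{2}$, so $|f'|^{q}(x)=x^{-2q}$, which is convex on $(0,\infty)$ for every $q>1$; in particular it is $(\alpha,m)$-convex with $\alpha=m=1$, which is exactly what item~(2) of Corollary \ref{2.c} requires.

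Next I would translate the left-hand side of (\ref{2-c}) into the mean notation of Section~3. From $f(a)=1/a$ and $f(b)=1/b$,
\begin{equation*}
\frac{\lambda f(a)+\mu f(b)}{\lambda+\mu}=\frac{1}{\lambda+\mu}\left(\frac{\lambda}{a}+\frac{\mu}{b}\right)=H_{\lambda/(\lambda+\mu)}^{-1}(a,b),
\end{equation*}
while $\frac{1}{b-a}\int_{a}^{b}\frac{dx}{x}=\frac{\ln b-\ln a}{b-a}=L^{-1}(a,b)$. These two identities produce exactly the expression inside the absolute value on the left-hand side of the claimed inequality.

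On the right-hand side two substitutions are needed. The factor $\bigl(\tfrac{\lambda^{p+1}+\mu^{p+1}}{\lambda+\mu}\bigr)^{1/p}$ appearing in (\ref{2-c}) equals $A_{\lambda/(\lambda+\mu)}^{1/p}(\lambda^{p},\mu^{p})$ directly from the definition of the weighted arithmetic mean. For the derivative term one computes $|f'(a)|^{q}+|f'(b)|^{q}=a^{-2q}+b^{-2q}$, and then uses the identity $H_{1/2}^{-1}(a^{2q},b^{2q})=\tfrac{1}{2}(a^{-2q}+b^{-2q})$ to rewrite this in terms of the unweighted harmonic mean. Raising to the $1/q$ power and combining with the pre-existing $(1/2)^{1/q}$ factor in (\ref{2-c}) then produces $H_{1/2}^{-1/q}(a^{2q},b^{2q})$ in the bound.

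Beyond the single application of (\ref{2-c}), this is routine algebra; there is no analytic obstacle. The only point requiring care is the bookkeeping of the powers of $1/2$ when converting the sum $a^{-2q}+b^{-2q}$ to its harmonic-mean form $H_{1/2}^{-1/q}(a^{2q},b^{2q})$, together with the factor $(1/2)^{1/q}$ already present on the right of (\ref{2-c}).
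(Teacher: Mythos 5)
Your approach is exactly the paper's: the published proof is the single line that the assertion follows from inequality (\ref{2-c}) in Corollary \ref{2.c} applied to $f(x)=1/x$, and your proposal supplies the verifications the paper omits (the identification of the left-hand side with $H_{\lambda/(\lambda+\mu)}^{-1}(a,b)-L^{-1}(a,b)$ and of $\bigl(\tfrac{\lambda^{p+1}+\mu^{p+1}}{\lambda+\mu}\bigr)^{1/p}$ with $A_{\lambda/(\lambda+\mu)}^{1/p}(\lambda^{p},\mu^{p})$), all of which are correct. One point you should make explicit rather than leave as ``bookkeeping'': since, as you correctly note, $\left(\tfrac{1}{2}\right)^{1/q}\left(\left\vert f^{\prime }(a)\right\vert ^{q}+\left\vert f^{\prime }(b)\right\vert ^{q}\right)^{1/q}=H_{1/2}^{-1/q}\left(a^{2q},b^{2q}\right)$, the factor $\left(\tfrac{1}{2}\right)^{1/q}$ coming from (\ref{2-c}) is entirely absorbed into the harmonic mean. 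The bound your computation actually yields is therefore
\begin{equation*}
\frac{b-a}{\lambda +\mu }A_{\frac{\lambda }{\lambda +\mu }}^{\frac{1}{p}}\left( \lambda ^{p},\mu ^{p}\right) \left( \frac{1}{p+1}\right) ^{\frac{1}{p}}H_{\frac{1}{2}}^{\frac{-1}{q}}\left( a^{2q},b^{2q}\right),
\end{equation*}
without the additional explicit factor $\left(\tfrac{1}{2}\right)^{1/q}$ that appears in the proposition as printed; the stated bound is smaller than this by a factor of $\left(\tfrac{1}{2}\right)^{1/q}$ and does not follow from (\ref{2-c}). The discrepancy is a defect of the statement (a superfluous $\left(\tfrac{1}{2}\right)^{1/q}$), not of your argument, but a complete write-up should flag it rather than gloss over the powers of $\tfrac{1}{2}$.
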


\begin{proof}
The assertion follows from inequality (\ref{2-c}) in Corollary \ref{2.c},
for $f:\left( 0,\infty \right) \mathbb{\rightarrow R},\ f(x)=\frac{1}{x}$
\end{proof}

\end{document}